\newtheorem{theorem}{Theorem}
\newtheorem{lemma}{Lemma}
\newtheorem*{theoremA}{Theorem A}
\newtheorem*{theoremB}{Theorem B}
\newtheorem*{theoremC}{Theorem C}
\newtheorem*{theoremD}{Theorem D}
\newtheorem*{theoremE}{Theorem E}
\newtheorem*{theorem*}{Theorem}
\newtheorem{corollary}{Corollary}
\newtheorem*{corollary*}{Corollary}
\newtheorem{definition}{Definition}
\newtheorem{example}{Example}
\newtheorem{proposition}{Proposition}
\newtheorem{remark}{Remark}
\newcommand{\diffto}{\xrightarrow{\raisebox{-0.2 em}[0pt][0pt]{\smash{\ensuremath{\sim}}}}}
\newcommand{\inc}{\hookrightarrow}
\newcommand{\rmap}{\longrightarrow}
\newcommand{\acts}{\curvearrowright}
\newcommand{\hor}{\mathrm{hor}}
\newcommand{\R}{\mathbb{R}}
\newcommand{\bd}{\partial}
\newcommand{\pr}{\operatorname{pr}}
\newcommand{\dd}{\mathrm{d}}
\newcommand{\HH}{\mathrm{H}}
\newcommand{\id}{\mathrm{id}}
\newcommand{\8}{\infty}
\newcommand{\N}{\mathbb{N}}
\newcommand{\X}{\mathfrak{X}}
\begin{document}
\title{Submersions by Lie algebroids}
\author{Pedro Frejlich}
\address{UFRGS, Instituto de Matem\'atica e Estat\'istica, Porto Alegre, Brasil}
\email{frejlich.math@gmail.com}
\maketitle
\begin{abstract}
 In this note, we examine the bundle picture of the pullback construction of Lie algebroids. The notion of submersions by Lie algebroids is introduced, which leads to a new proof of the local normal form for lie algebroid transversals of \cite{BLM}, and which we use to deduce that Lie algebroids transversals concentrate all local cohomology.
 
 The locally trivial version of submersions by Lie algebroids $\mathfrak{S}$ is then discussed, and we show that this notion is equivalent to the existence of a complete Ehresmann connection for $\mathfrak{S}$, extending the main result in \cite{Matias}.
 
 Finally, we show that locally trivial version of submersions by Lie algebroids gives rise to a system of local coefficients, which is an integral part of a version of the homotopy invariance of de Rham cohomology in the context of Lie algebroids, and we apply such local systems to extend the localization theorem of \cite{ChenLiu_Loc}.
\end{abstract}
 \tableofcontents

\section{Introduction}

Foremost among the properties that general Lie algebroids share with the prototypical example of tangent bundles is that they enjoy a wholly analogous machinery of \emph{flows}: every section $a \in \Gamma(A)$ of a Lie algebroid $A$ on $M$ gives rise (modulo completeness issues) to a morphism Lie algebroids
\[
 (\Phi,\phi):A \times T\R \rmap A,
\]its flow, uniquely characterized by $\bd_{\epsilon}\Phi_{\epsilon}^{\dagger}(b)=\Phi_{\epsilon}^{\dagger}([a,b]_A)$ for all $b \in \Gamma(A)$, where $\Phi_{\epsilon}^{\dagger}(b)$ denotes $\Phi_{\epsilon}^{-1}(b \circ \phi)$. Already for the case of tangent bundles, submanifolds transverse to flows play a certain privileged role of \emph{local trivialization}. This phenomenon has its clearest illustration in the collaring neighborhood theorem for manifolds with boundary \cite[Corollary 3.5]{Milnor}, and lies at the heart of the homotopy classification of bundles.

The present note is concerned with the interaction between transverse submanifolds and flows, in which strong emphasis is put on functorial and cohomological aspects. Concretely, we study smooth families of Lie algebroids $A_x$, parametrized by a smooth manifold $x \in M$. Smoothness is encoded into the existence of a surjective submersion $p:\Sigma \to M$, and a Lie algebroid $K$ on $\Sigma$, whose anchor $\varrho_K:K \to T\Sigma$ is vertical, so that $K$ restricts on each fibre $\Sigma_x:=p^{-1}(x)$ to a Lie algebroid $A_x$.

If we wish to compare points in arbitrary different fibres using Lie algebroid flows, the natural condition to impose is that there be an extension of $TM$ by $K$ -- that is, that there be a Lie algebroid $A$ on $\Sigma$, in which $K$ sits as the kernel of a surjective morphism of Lie algebroids $A \to TM$ over $p$. This condition is equivalent to demanding that each fibre of $p$ sit in $A$ as a \emph{Lie algebroid transversal}. This is abstracted into the main notion introduced in this note:

\begin{definition}
A {\bf submersion by Lie algebroids} $\mathfrak{S}=(A,p)$ on $M$ is a Lie algebroid $A$ on the total space of a surjective submersion $p:\Sigma \to M$, in which each fibre is a Lie algebroid transversal.  
\end{definition}

There is an obvious notion of morphism between submersions by Lie algebroids, as well as an obvious notion of Ehresmann connection for such objects. A simple, but crucial property enjoyed by submersions by Lie algebroids $\mathfrak{S}=(A,p)$ on $M$ is that they pull back under arbitrary smooth maps $\phi:N \to M$, in the sense that there is an induced submersion by Lie algebroids $\phi^!\mathfrak{S}$, together with a morphism into $\mathfrak{S}$.

Submersions by Lie algebroids specialize in extreme cases to surjective submersion (when $A=T\Sigma$), transitive Lie algebroids (when $\Sigma=M$ and $p=\id_M$), and to Lie algebroids themselves (when $M$ is a point). Other examples arise by pulling back Lie algebroids by map from product manifolds $N \times M$, in which transversality is demanded of each fixed $x \in M$. For instance, a Lie algebroid $A$ on the total space of a vector bundle $p:E \to X$ pulls back under the scalar multiplication map $m:E \times [0,1] \to E$ to a submersion by Lie algebroids over the unit interval iff $X$ is a Lie algebroid transversal in $A$. This observation yields another proof of the normal form for transversals in Lie algebroids \cite{BLM}; in both Theorems A and B below, $A$ be a Lie algebroid on $M$, $\nabla:A \curvearrowright D$ a representation, and $i:X\inc M$ the inclusion of a Lie algebroid transversal.
\begin{theoremA}
On a tubular neighborhood $p:U \to X$ of $X$ in $M$, the representations $p^!i^!(\nabla)$ and $\nabla$ are isomorphic,
\end{theoremA}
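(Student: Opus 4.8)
The plan is to realize the desired isomorphism as parallel transport along the scalar-multiplication flow, in the spirit of the observation recalled above. First I would fix a tubular neighborhood and identify $U$ with a neighborhood of the zero section in the total space of the normal bundle $p:E \rmap X$, so that $i:X \inc U$ becomes the zero section and $p$ the bundle projection. Scalar multiplication $m:E \times [0,1] \rmap E$, $m(e,t)=te$, then pulls $A|_U$ back to a submersion by Lie algebroids $\mathfrak{S}=(m^!A,\pr_{[0,1]})$ over the unit interval, \emph{precisely because} $X$ is a transversal. Its fibre over $t$ is the restricted kernel $K_t=m_t^!A$, where $m_t(e)=te$; since $m_1=\id_E$ and $m_0=i\circ p$, the two endpoint fibres are exactly the objects to be compared --- $K_1=A|_U$ with representation $\nabla$, and $K_0=p^!i^!(A)$ with representation $p^!i^!(\nabla)$.

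Next I would choose an Ehresmann connection for $\mathfrak{S}$, i.e.\ a lift of $\bd_t$ to a section $a$ of $m^!A$, and pass to its flow $(\Phi,\phi)$. This is a morphism of Lie algebroids covering the translation flow $\phi$ on $[0,1]$, and by its defining property $\bd_\epsilon\Phi_\epsilon^\dagger(b)=\Phi_\epsilon^\dagger([a,b]_{m^!A})$ it intertwines the fibrewise brackets; transporting from $t=1$ to $t=0$ thus yields an isomorphism of Lie algebroids $A|_U \diffto p^!i^!(A)$. To promote this to an isomorphism of representations I would let the same flow act on the pulled-back bundle $m^!D$ through $\nabla$. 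Flatness of $\nabla$ guarantees that the resulting parallel transport is a morphism of representations, so it descends to the sought isomorphism $\nabla \diffto p^!i^!(\nabla)$ covering the Lie-algebroid isomorphism just produced.

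The step I expect to be the main obstacle is completeness: the argument delivers a global isomorphism only if the flow $\Phi$ is defined on all of $[0,1]$, whereas a priori the scalar-multiplication flow may leave $U$. Because the base $[0,1]$ is compact, this is fixable by shrinking the tubular neighborhood and choosing the connection to be complete --- and the equivalence between complete Ehresmann connections and local triviality of $\mathfrak{S}$, developed later in the note, is exactly what makes this precise. The one genuinely delicate analytic point is the smoothness of $\mathfrak{S}$ across $t=0$, where $m$ is singular; but that $\mathfrak{S}$ remains a bona fide submersion by Lie algebroids there is guaranteed by transversality, so no separate estimate is required.
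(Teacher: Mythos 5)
Your overall route coincides with the paper's: identify $U$ with the normal bundle, pull $A$ back along scalar multiplication $m:E\times[0,1]\to E$ to get a submersion by Lie algebroids over $[0,1]$ (this is Corollary \ref{cor : lalg and scalar}), flow along a lift $a$ of $\bd_{\epsilon}$, and handle the representation by letting the flow act on $D$ (the paper packages this as the semidirect product $\mathscr{A}_{\nabla}=A\oplus D$ and flows the section $(a,0)$, noting the flow is block-diagonal and, since $D$ is totally intransitive, defined exactly where the $A$-part is). However, your resolution of the completeness obstacle is a genuine gap, and in fact circular. Compactness of the base $[0,1]$ does not prevent the flow from escaping in the fibre directions, and ``choosing the connection to be complete'' is not available: by Theorem \ref{thm : LAB iff}, a complete Ehresmann connection for $\mathfrak{S}(m,A)$ exists if and only if $\mathfrak{S}(m,A)$ is locally trivial, and local triviality at $t=0$ is precisely a global-in-$E$ strengthening of the normal form you are trying to prove. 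It can genuinely fail: in the paper's example in Section \ref{sec : Local triviality and completeness} (the cotangent algebroid of a bivector on $\R^2$ supported on the unit disk and nondegenerate at the origin, with $X=\{0\}$ a transversal), the fibre of $\mathfrak{S}(m,B)$ over $t=0$ is transitive while all other fibres are not, so no complete Ehresmann connection exists --- yet Theorem A holds for this $X$. Shrinking the tubular neighborhood does not obviously restore completeness either, since knowing that the fibres become isomorphic after shrinking is again exactly the conclusion sought.

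The paper's substitute for completeness is a normalization of $a$ along the transversal: since $X\times I$ is a Lie algebroid transversal in $(E\times I,\,m^!(A))$ with $(i,\id)^!m^!(A)=i^!(A)\times TI$, one may choose $a$ with $\tau_{m^!(A)}(a)=\bd_{\epsilon}$ \emph{and} $a=\bd_{\epsilon}$ along $X\times I$. The flow is then complete along $X\times I$ (there it is just translation in $\epsilon$, fixing $X$), and since the domain of a flow is open, there is an open neighborhood $U$ of $X\times\{0\}$ on which $\Phi_{\epsilon}$ is defined up to time $\epsilon=1$. The price --- which matches the statement of Theorem \ref{thm : normal forms for transversals} --- is that the time-one map is an isomorphism $p^!i^!(A)|_U \diffto A|_{\varphi_1(U)}$ only near $X$, onto a possibly different neighborhood $\varphi_1(U)$; no completeness or local triviality is invoked anywhere. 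Once this fix is in place, your representation step goes through essentially as you describe (Corollary \ref{cor : nf for reps}).
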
%	=\noindent and hence Lie algebroid transversals locally concentrate Lie algebroid cohomology:
\begin{theoremB}
The inclusion $i:X \inc U$ induces an isomorphism in Lie algebroid cohomology with coefficients: $i^*:\HH(A|_U;D|_U) \diffto \HH(i^!(A);i^*(D))$.
\end{theoremB}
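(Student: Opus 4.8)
The plan is to produce an explicit inverse to $i^*$ built from the projection $p$, and to reduce the whole statement to a relative Poincar\'e lemma for $p$. Theorem A provides more than an abstract isomorphism of representations: unwinding $p^!i^!(\nabla)\cong\nabla$ gives an isomorphism of Lie algebroids $p^!(i^!A)\cong A|_U$ identifying the coefficient systems $p^*(i^*D)\cong D|_U$. Under these identifications the canonical pullback morphism $p^!(i^!A)\to i^!A$ covering $p$ induces a map $p^*\colon\HH(i^!A;i^*D)\to\HH(A|_U;D|_U)$, and Theorem B becomes the assertion that $i^*$ and $p^*$ are mutually inverse.

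One composite is formal. Since $p\circ i=\id_X$, and since the normal-form isomorphism of Theorem A may be arranged to restrict to the identity along the zero section $X$, functoriality of $!$-pullback and of the induced maps on cohomology gives $i^*\circ p^*=(p\circ i)^*=\id$ on $\HH(i^!A;i^*D)$.

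The other composite is where the work lies. Writing $r:=i\circ p\colon U\to U$ for the retraction onto the zero section, functoriality gives $p^*\circ i^*=r^*$, so it suffices to show that $r$ acts as the identity on $\HH(A|_U;D|_U)$. Here I would use homotopy invariance along the scalar-multiplication homotopy. Identifying $U$ with a fibrewise star-shaped neighbourhood of the zero section in the normal bundle, let $m\colon U\times[0,1]\to U$ be scalar multiplication, a smooth homotopy from $m_0=r$ to $m_1=\id_U$. By the observation recorded in the introduction, pulling $A|_U$ back along $m$ yields a submersion by Lie algebroids over $[0,1]$ exactly because $X$ is a Lie algebroid transversal; this is precisely the structure that lets one transport $D$-valued $A|_U$-cochains along the homotopy. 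Differentiating in the $[0,1]$-parameter and integrating then produces a homotopy operator $h$ on the complex $\Omega^\bullet(A|_U;D|_U)$ with $\id-r^*=dh+hd$, whence $r^*=\id$ on cohomology.

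I expect the construction of $h$ to be the main obstacle. In contrast with the classical case one cannot simply contract with a coordinate field along $[0,1]$ and integrate; the correct replacement is the Cartan calculus (contraction and Lie derivative) along the tautological section of the $T\R$-summand of the pullback algebroid $m^!(A|_U)$, governed by the flow characterization recalled at the very start of the introduction. Once this Cartan calculus for the submersion by Lie algebroids over $[0,1]$ is set up, the usual integration-along-the-fibre computation, combined with Stokes' theorem on the fibre $[0,1]$, yields the chain-homotopy identity and hence the theorem.
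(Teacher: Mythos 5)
Your proposal is correct in substance, but it takes a genuinely different route at the key analytic step, so a comparison is in order. The paper's proof (Theorem \ref{thm : cohomology concentrated}) shares your first half verbatim: using Corollary \ref{cor : nf for reps} (the representation version of Theorem A, arranged to restrict to the identity on $i^!(\nabla)$), it observes that the composite $i^* \circ \Phi^{-1*} \circ p^*$ is the identity on $\HH(i^!(A);i^*(D))$ --- this is exactly your computation $i^*\circ p^*=\id$. But for the other composite the paper does \emph{not} build a two-sided inverse: it simply cites \cite[Theorem 2]{Cr_VEst} to assert that $p^*$ is an isomorphism once the tubular neighborhood is shrunk so that $p:U \to X$ has contractible fibres, and concludes $i^*=(\Phi^{-1*}\circ p^*)^{-1}$. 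You instead re-prove this Poincar\'e-lemma input by hand, which makes the argument self-contained; the price is the homotopy operator, and there your sketch has one loose joint worth tightening. The algebroid $m^!(A|_U)$ carries no canonical ``$T\R$-summand'' or tautological section: a lift of $\bd_{\epsilon}$ is a pair $(u+\bd_{\epsilon},a)$ with $m_*(u+\bd_{\epsilon})=\varrho_A(a)$, and $m_*(\bd_{\epsilon})$ is an Euler-type vertical vector not generally in the image of $\varrho_A$; such lifts exist because $m^!(A|_U)$ is a submersion by Lie algebroids over $[0,1]$ (Corollary \ref{cor : lalg and scalar}), but trivializing along the flow of one amounts to re-running the proof of Theorem \ref{thm : normal forms for transversals}, with the same completeness and shrinking issues. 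The efficient fix is to exploit your normal-form identification \emph{before} the homotopy: since $p\circ m_{\epsilon}=p$ for every $\epsilon$, one has canonically $m^!\bigl(p^!i^!(A)\bigr)=(p\circ m)^!\bigl(i^!(A)\bigr)=\pr^!\bigl(p^!i^!(A)\bigr)=p^!i^!(A)\times TI$, with endpoint slices the identity (at $\epsilon=1$) and the canonical morphism covering $r$ (at $\epsilon=0$), and likewise for the coefficients $m^*p^*i^*(D)$; the classical operator $h(\omega)=\int_0^1\iota_{\bd_{\epsilon}}\omega\,\dd\epsilon$ then yields $\id-r^*=\dd_{\nabla}h+h\dd_{\nabla}$ verbatim, with no flows, no completeness discussion, and no new Cartan calculus. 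This canonical product structure is also what protects you from a trap your sketch silently skirts: homotopy invariance is \emph{false} in general for Lie algebroid cohomology --- pullback along a homotopy through transverse maps is parallel transport in the local system, not the identity (see Corollary \ref{cor : Homotopy invariance} and the example following it) --- and it is precisely the fibrewise nature of $m$ over $X$, not transversality alone, that trivializes the monodromy here. With that repair your two-sided-inverse argument is complete; compared with the paper's, it trades brevity-by-citation for a self-contained proof of exactly the special case of Crainic's theorem that is needed.
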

\noindent See Theorem \ref{thm : normal forms for transversals}, Corollary \ref{cor : nf for reps} and Theorem \ref{thm : cohomology concentrated} for detailed statements.

\vspace{0.1cm}

In any case, it follows that each 'Lie algebroid fibre' $i_x^!\mathfrak{S}$ of a submersion by Lie algebroids $\mathfrak{S}=(A,p)$ determines the ambient Lie algebroid structure in an open neighborhood of the 'manifold fibre' $\Sigma_x=p^{-1}(x)$. However, such a neighborhood need not be saturated by fibres of $p$, and this completeness issue leads naturally to the following

\begin{definition}
 A submersion by Lie algebroids $\mathfrak{S}$ on $M$ is {\bf locally trivial} if every point $x \in M$ has an open neihborhood $U$, over which there is an isomorphism $i_x^!\mathfrak{S} \times TU \diffto i_U^!\mathfrak{S}$ covering $\id_U$.
\end{definition}For example, transitive Lie algebroids automatically satisfy this condition. Our next result generalizes \cite{Matias} to give a characterization of locally trivial submersions by Lie algebroids:
\begin{theoremC}
 A submersion by Lie algebroids $\mathfrak{S}$ is locally trivial if and only if it admits a complete Ehresmann connection.
\end{theoremC}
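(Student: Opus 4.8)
The plan is to recast an Ehresmann connection for $\mathfrak{S}=(A,p)$ as a splitting $\sigma\colon p^{*}TM\rmap A$ of the surjection $\pi\colon A\rmap p^{*}TM$ determined by the morphism $A\rmap TM$ over $p$ (so that $K=\ker\pi$), and to reduce completeness to a statement about the bare submersion. Writing $\varrho\colon A\rmap T\Sigma$ for the anchor, compatibility of the morphism $A\rmap TM$ with anchors gives $\dd p\circ\varrho=\pr\circ\pi$, so that $\varrho\circ\sigma\colon p^{*}TM\rmap T\Sigma$ splits $\dd p$ and is therefore an ordinary Ehresmann connection $H^{\Sigma}$ on the submersion $p\colon\Sigma\rmap M$. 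The first step is to observe that $\sigma$ is complete precisely when $H^{\Sigma}$ is: the Lie algebroid flow of a horizontal lift $\sigma(X)$ covers the base flow of $\varrho(\sigma(X))$ --- which is exactly the $H^{\Sigma}$-horizontal lift of $X$ --- and over a prescribed base trajectory the bundle part of the flow obeys a linear (cocycle) equation, hence extends for as long as the trajectory does. This isolates every completeness issue in the underlying submersion and exhibits the statement as a fibrewise-linear enhancement of \cite{Matias}.

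For the implication ``complete $\Rightarrow$ locally trivial'', fix $x_{0}\in M$ and a contractible neighborhood $U$. Completeness of $H^{\Sigma}$ lets parallel transport integrate (the Ehresmann direction of \cite{Matias}), producing a diffeomorphism $\Sigma_{x_{0}}\times U\diffto p^{-1}(U)$ over $\id_{U}$; transporting the Lie algebroid structure of $A$ across it yields a submersion by Lie algebroids $\tilde A$ on $\Sigma_{x_{0}}\times U$ whose fibre over $x_{0}$ is $i_{x_{0}}^{!}\mathfrak{S}$. Now $p^{-1}(x_{0})=\Sigma_{x_{0}}\times\{x_{0}\}$ is a Lie algebroid transversal in $\tilde A$, and since $U$ is contractible its tubular neighborhood may be taken to be the whole product; the normal form for transversals then identifies $\tilde A$ with the pullback of its fibre along the tubular projection $\Sigma_{x_{0}}\times U\rmap\Sigma_{x_{0}}$, namely $i_{x_{0}}^{!}\mathfrak{S}\times TU$. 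Thus $i_{U}^{!}\mathfrak{S}\cong i_{x_{0}}^{!}\mathfrak{S}\times TU$ over $\id_{U}$, which is local triviality: completeness supplies the saturated trivialization of the bare bundle, while the normal form upgrades it for free to an isomorphism onto the flat product.

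For the converse, assume $\mathfrak{S}$ is locally trivial. Forgetting the algebroid, $p\colon\Sigma\rmap M$ is then a locally trivial fibre bundle, so \cite{Matias} furnishes a complete ordinary Ehresmann connection; the point is to obtain one that is \emph{liftable}, i.e. of the form $H^{\Sigma}=\varrho\circ\sigma$ for a splitting $\sigma$ of $\pi$. Over each trivializing $U_{i}$ the flat product connection on $i_{x}^{!}\mathfrak{S}\times TU_{i}$ is such a liftable splitting, complete over $U_{i}$, and liftability survives locally finite convex combination: if $h_{i}=\varrho(a_{i})$ with $\pi(a_{i})=v$ and $\{\chi_{i}\}$ is a subordinate partition of unity, then $\sum_{i}\chi_{i}h_{i}=\varrho\bigl(\sum_{i}\chi_{i}a_{i}\bigr)$ while $\pi\bigl(\sum_{i}\chi_{i}a_{i}\bigr)=v$, so the average is again $\varrho\circ\sigma$. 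Carrying out the completeness construction of \cite{Matias} relative to the trivializing cover provided by local triviality therefore produces a complete liftable $H^{\Sigma}$, and the reduction of the first paragraph turns its lift $\sigma$ into a complete Ehresmann connection for $\mathfrak{S}$.

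I expect the main obstacle to lie in the converse, where $p$ need not be proper: a naive partition-of-unity average of product connections can have horizontal lifts that run off to infinity along the fibres in finite time, and taming this non-compact phenomenon is exactly what \cite{Matias} achieves. By contrast the genuinely Lie-algebroid content is soft --- the linear-cocycle argument of the first paragraph and the liftability bookkeeping above --- so the proof ultimately hinges on importing the completeness construction of \cite{Matias} in a form compatible with the liftable product class, and, in the forward direction, on checking that the normal form for transversals indeed applies over the entire contractible product $\Sigma_{x_{0}}\times U$.
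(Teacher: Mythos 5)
Your opening reduction is sound: the equivalence between completeness of the algebroid splitting and completeness of the underlying connection $H^{\Sigma}=\varrho_A(C)$ is exactly condition C2 in the paper's list of equivalent formulations of completeness, and your linear-cocycle argument for it is correct. The forward direction, however, breaks where you invoke the normal form for transversals over the entire product. Theorem \ref{thm : normal forms for transversals} produces an isomorphism only on \emph{some} open neighborhood of the transversal $\Sigma_{x_0}\times\{x_0\}$ inside $\Sigma_{x_0}\times U$: its proof flows an arbitrary section $a$ extending $\bd_{\epsilon}$ up to time one, and when the fibre $\Sigma_{x_0}$ is non-compact this neighborhood need not contain any saturated set $\Sigma_{x_0}\times U'$ --- this unsaturatedness is precisely the phenomenon that motivates the definition of local triviality in the first place, so declaring the tubular neighborhood ``to be the whole product'' does not help. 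What repairs it is not the statement of the normal form theorem but its proof technique run with a \emph{complete} section: choose $a$ to be the horizontal lift of $\bd_{\epsilon}$ for the pullback of the complete connection along a contraction of $U$, so that its flow is globally defined. That is the paper's route: item c) of Proposition \ref{pro : pullback of complete}, applied to contractions $f_i:U_i\times[0,1]\to U_i$, yields $f_0^!\mathfrak{S}\times TI \diffto f^!\mathfrak{S}$ directly, with no detour through Ehresmann's theorem or the normal form. Since you have completeness and your own paragraph-one reduction available, this gap is repairable, but as written the step fails.

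The converse has the more serious gap. You correctly identify that the complete connection must be liftable, i.e.\ of the form $\varrho_A\circ\sigma$ (in the paper this is automatic, since Ehresmann connections are by definition splittings $A=K(A)\oplus C$, which are sections of an affine bundle --- your convex-combination remark is the same observation), but the actual construction of a complete connection in this class is deferred wholesale to ``the completeness construction of \cite{Matias}.'' That cannot be imported as a black box: del Hoyo's theorem outputs an ordinary connection on $p$, which has no reason to lie in $\varrho_A(A)$ unless $A$ is transitive, and his tangency criterion must be arranged \emph{simultaneously} across overlapping charts whose fibrewise compact exhaustions are a priori unrelated. The paper's Proposition \ref{pro : subexhaustions} is exactly this missing adaptation: one inductively passes to subexhaustions satisfying the compatibility condition $(\ast)$, so that the boundary pieces $K_i\times\bd X_{i,n}$ coming from different charts are disjoint; one then glues the product connections near these boundaries via an open embedding and extends affinely within the class of algebroid splittings; completeness finally follows from the tangency criterion, item d) of Proposition \ref{pro : pullback of complete}, which is where \cite[Lemma 2]{Matias} actually enters. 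The inductive subexhaustion argument (run first for two charts, then lexicographically for finitely many, then for a locally finite countable cover) is the genuinely new technical content of this direction of the theorem; your proposal flags it as ``the main obstacle'' --- accurately --- but supplies no argument for it, so the converse remains unproved as written.
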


For locally trivial submersions by Lie algebroids, $\mathfrak{S}=(A,p)$ and any representation $\nabla:A \acts D$, the assignment
\[
\mathscr{H}_{\nabla}(U):=\HH(A|_{\Sigma|_U},D|_{\Sigma|_U}) 
\]defines a locally constant presheaf on $M$, so the associated sheaf $\pr:\mathscr{H}_{\nabla}^+ \to M$ is a covering space. In the next result, we describe the monodromy of the covering:

\begin{theoremD}
 The monodromy of $\mathscr{H}_{\nabla}$ with respect to a path $c:[0,1] \to M$ can be described by either:
 \begin{enumerate}[i)]
  \item parallel transport along $c$ with respect to any complete Ehresmann connection;
  \item extensions to closed cocycles on $c^!\mathfrak{S}$,
 \end{enumerate}
\end{theoremD}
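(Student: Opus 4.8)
\section*{Proof proposal for Theorem D}

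The plan is to reduce both descriptions to the universal case of the interval and to identify each with the intrinsic monodromy of the covering $\pr:\mathscr{H}_{\nabla}^+ \to M$. The first step is to pin down the stalks. By Theorem B, for $x \in M$ and any sufficiently small neighbourhood $U \ni x$, restriction to the manifold fibre $\Sigma_x$ induces an isomorphism $\mathscr{H}_{\nabla}(U)=\HH(A|_{\Sigma|_U};D|_{\Sigma|_U}) \diffto \HH(i_x^!(A); i_x^*(D))$, so the stalk of $\mathscr{H}_{\nabla}$ at $x$ is canonically the cohomology of the Lie algebroid fibre $i_x^!\mathfrak{S}$. Since $\mathscr{H}_{\nabla}^+$ is a covering space, the monodromy along a path $c:[0,1] \to M$ is by definition computed by pulling back to the interval: the covering $c^{-1}\mathscr{H}_{\nabla}^+ \to [0,1]$ is trivial, and the monodromy is the resulting identification of the stalks over $0$ and $1$.

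Next I would realize this pullback geometrically through $c^!\mathfrak{S}$, a submersion by Lie algebroids over $[0,1]$. The crucial input is a homotopy-invariance statement for $c^!\mathfrak{S}$: the restrictions $r_0$ and $r_1$ to the two endpoint fibres induce isomorphisms on $\HH(c^!(A); \ldots)$. Granting this, a class over $c(0)$ extends, uniquely up to coboundary, to a closed cocycle on $c^!\mathfrak{S}$ (uniqueness because $r_0$ is injective, existence because it is surjective), and restricting that extension to $c(1)$ yields $r_1 \circ r_0^{-1}$. This is exactly description (ii), and it agrees with the intrinsic monodromy because the sheaf pulled back to $[0,1]$ is precisely the assignment $t \mapsto \HH$ of the fibre of $c^!\mathfrak{S}$ over $t$, trivialized by the global sections $\HH(c^!(A); \ldots)$.

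For description (i), I would invoke Theorem C to produce a complete Ehresmann connection for $\mathfrak{S}$ and pull it back to $c^!\mathfrak{S}$. Parallel transport along the interval, defined on all of $[0,1]$ precisely because the connection is complete, then yields a global trivialization $c^!\mathfrak{S} \diffto i_{c(0)}^!\mathfrak{S} \times T[0,1]$. Under this trivialization the endpoint restrictions $r_0,r_1$ become restriction of the product to $\{0\}$ and $\{1\}$, which on cohomology both coincide with the canonical isomorphism $\HH(i_{c(0)}^!(A) \times T[0,1]; \ldots) \cong \HH(i_{c(0)}^!(A); \ldots)$; hence the parallel-transport isomorphism equals $r_1 \circ r_0^{-1}$, proving (i) $=$ (ii). In particular the map is independent of the chosen complete connection, since (ii) refers to none.

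The main obstacle I anticipate is the homotopy-invariance step, namely that $r_0$ and $r_1$ are isomorphisms on $\HH(c^!(A); \ldots)$. I would prove this by constructing an explicit cochain homotopy --- an ``integration along the interval'' operator $h$ with $r_1-r_0=\dd h + h\,\dd$ --- either directly on the complex of $c^!A$, or after the trivialization above, where it reduces to the Poincar\'e-lemma/K\"unneth computation $\HH(i_{c(0)}^!(A) \times T[0,1]) \cong \HH(i_{c(0)}^!(A))$. Verifying that parallel transport is a genuine morphism of Lie algebroid fibres, so that it acts on cohomology at all, and that this action matches the trivialization, are the remaining technical points.
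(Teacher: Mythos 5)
Your proposal is correct and follows essentially the same route as the paper: the paper \emph{defines} $\mathrm{mon}(c):=(i_{c(1)})^*\circ(i_{c(0)})^{*-1}$ (your description (ii), with $i_0^*,i_1^*$ isomorphisms obtained from the flow trivialization $c^!\mathfrak{S}\simeq i_{c(0)}^!\mathfrak{S}\times TI$ of item c) of Proposition \ref{pro : pullback of complete} together with \cite[Theorem 2]{Cr_VEst}, rather than from your explicit integration-along-the-interval homotopy operator), and then proves description (i) exactly as you do, by noting that under this trivialization the pulled-back connection $(\Phi,\phi)^!C$ is flat, so parallel transport realizes $\mathrm{mon}(c)=\Phi_1^{*-1}$. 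The one small imprecision is your appeal to Theorem B for the stalk identification: the tubular neighborhood it produces need not be saturated by fibres of $p$, which is precisely why the paper instead derives local constancy of $\mathscr{H}_{\nabla}$ from local triviality plus Crainic's theorem (Lemma \ref{lem : cohomology presheaf locally constant for lab}) --- though under the standing local-triviality hypothesis your conclusion is the same.
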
\noindent see Theorem \ref{thm : Monodromy} for a detailed statement. When the 'Lie algebroid fibre' has finite-dimensional cohomology, $\mathscr{H}_{\nabla}^+$ is the sheaf of local parallel sections of a flat vector bundle $\mathbb{H}$ on $M$. We conclude by generalizing to locally trivial submersions by Lie algebroids the localization principle for cohomology classes of degree one given in \cite[Theorem 2.2]{ChenLiu_Loc}:

\begin{theoremE}
 If $\mathfrak{S}=(A,p)$ is a locally trivial submersion by Lie algebroids on a connected manifold $M$, and $\nabla:A \acts D$ is a representation for which $\HH^q(A_x;D_x)=0$ for $q<n-1$. Then the restriction map\[i_x^*:\HH^n(A;D) \to \HH^n(A_x;D_x)\]is injective, provided that either $\HH^{n-1}(A_x;D_x)=0$, or that $M$ be simply connected and $\HH^{n-1}(A_x;D_x)$ have finite dimension.
\end{theoremE}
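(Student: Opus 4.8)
The plan is to deduce Theorem E from the Leray--Hochschild--Serre spectral sequence of the submersion by Lie algebroids $\mathfrak{S}=(A,p)$, taking the local system $\mathscr{H}_\nabla$ of Theorem D as coefficients on the base. Concretely, I would use the complete Ehresmann connection furnished by Theorem C to filter the Chevalley--Eilenberg complex computing $\HH(A;D)$ by horizontal degree, i.e.\ by the order of vanishing along the fibres $\Sigma_x=p^{-1}(x)$. Since the anchor of the quotient $A \to TM$ is all of $TM$ and the kernel restricts to $A_x$ on each fibre, the $E_1$-page computes fibrewise Lie algebroid cohomology and the $d_1$-differential is the de Rham differential of $M$ twisted by parallel transport, so that
\[
E_2^{p,q}=\HH^p\bigl(M;\mathscr{H}^q_\nabla\bigr)\ \Longrightarrow\ \HH^{p+q}(A;D),
\]
where $\HH^p(M;-)$ denotes de Rham cohomology of $M$ with coefficients in the flat local system. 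The edge homomorphism $\HH^n(A;D)\to E_2^{0,n}=\HH^0(M;\mathscr{H}^n_\nabla)$, post-composed with the inclusion $\HH^0(M;\mathscr{H}^n_\nabla)\hookrightarrow \HH^n(A_x;D_x)$ of monodromy-invariant classes, is exactly the restriction map $i_x^*$.

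The second step is a bookkeeping argument on this spectral sequence. The hypothesis $\HH^q(A_x;D_x)=0$ for $q<n-1$ says the local system $\mathscr{H}^q_\nabla$ vanishes for $q\le n-2$, so $E_2^{p,q}=0$ below the line $q=n-1$. In particular every subquotient $E_\infty^{p,n-p}$ with $p\ge 2$ vanishes, whence the induced decreasing filtration on $\HH^n(A;D)$ satisfies $F^2=0$ and $F^1=E_\infty^{1,n-1}$. A degree count shows that no differential $d_r$ with $r\ge 2$ can enter or leave the slot $(1,n-1)$: a differential out of it lands in $E_r^{1+r,n-r}$ with $n-r\le n-2$, hence in the vanishing region, while a differential into it originates from $E_r^{1-r,\ast}$ with negative first index. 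Therefore $E_\infty^{1,n-1}=E_2^{1,n-1}=\HH^1(M;\mathscr{H}^{n-1}_\nabla)$, and since the kernel of the edge map $i_x^*$ is precisely $F^1$, I obtain the clean identification
\[
\ker\bigl(i_x^*:\HH^n(A;D)\to \HH^n(A_x;D_x)\bigr)=\HH^1\bigl(M;\mathscr{H}^{n-1}_\nabla\bigr).
\]

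It then remains to show that this group vanishes under either hypothesis. If $\HH^{n-1}(A_x;D_x)=0$, the local system $\mathscr{H}^{n-1}_\nabla$ is the zero sheaf and there is nothing to prove. If instead $M$ is simply connected and $\HH^{n-1}(A_x;D_x)$ is finite-dimensional, then by the discussion preceding Theorem D the sheafification of $\mathscr{H}^{n-1}_\nabla$ is the sheaf of flat sections of a flat vector bundle $\mathbb{H}$; triviality of $\pi_1(M)$ forces trivial monodromy, so $\mathbb{H}$ is the trivial bundle with fibre $\HH^{n-1}(A_x;D_x)$, and consequently $\HH^1(M;\mathscr{H}^{n-1}_\nabla)\cong \HH^1(M;\R)\otimes \HH^{n-1}(A_x;D_x)$. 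As $M$ is simply connected, $\HH^1(M;\R)=\mathrm{Hom}(H_1(M),\R)=0$, so the kernel vanishes. This yields injectivity in both cases.

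The main obstacle I anticipate is the first step: constructing the spectral sequence in the Lie algebroid setting and identifying its $E_2$-page with the twisted cohomology $\HH^p(M;\mathscr{H}^q_\nabla)$. This is where completeness of the Ehresmann connection is essential, as it is what renders the horizontal filtration bounded and convergent and what lets parallel transport (Theorem D) identify $d_1$ with the flat connection on the local system; it is also where finite-dimensionality of the fibre cohomology enters, since the identification of $\mathscr{H}^{n-1}_\nabla$ with flat sections of an honest vector bundle $\mathbb{H}$, needed for the universal-coefficient computation of case (b), requires finite-dimensional stalks. By contrast, the filtration bookkeeping and the simply-connected computation are routine once the spectral sequence is in place.
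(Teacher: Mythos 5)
Your second and third steps coincide with the paper's own endgame in Theorem \ref{thm : localization of cohomology}: the vanishing of all $E_\infty^{p,q}$ with $p+q=n$ except possibly $p\in\{0,1\}$, the degree count showing $E_\infty^{1,n-1}=E_2^{1,n-1}$, its vanishing under c1) directly and under c2) via trivial monodromy together with $\HH^1(M;\mathscr{H}^{n-1}_\nabla)\simeq\HH^1(M)\otimes\HH^{n-1}(A_x;D_x)=0$ (which is where finite-dimensionality genuinely enters, in the paper exactly as in your argument), and the injectivity of evaluation at $x$ on $\HH^0(M;\mathscr{H}^n_\nabla)$ by connectedness of $M$. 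Your explicit identification $\ker(i_x^*)=\HH^1(M;\mathscr{H}^{n-1}_\nabla)$ is, if anything, a cleaner packaging of the paper's bookkeeping.

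The gap is in your first step, which you correctly flag as the main obstacle but do not close, and where the paper goes a different way. For the Hochschild--Serre-type filtration by horizontal degree, the $E_1$-page is \emph{not} automatically ``fibrewise Lie algebroid cohomology'': it is the cohomology $\HH^q\bigl(K(A);p^*\wedge^pT^*M\otimes D\bigr)$ of the vertical subalgebroid computed over all of $\Sigma$, and identifying this with $p$-forms on $M$ valued in the fibrewise cohomology -- and hence $E_2^{p,q}$ with $\HH^p(M;\mathscr{H}^q_\nabla)$ -- is a local-to-global statement, not a degree count. It requires either that the fibrewise cohomology assemble into a vector bundle (finite, locally constant dimension, which Theorem E does not assume in degree $n$, and assumes in degree $n-1$ only under alternative c2); indeed, without finite-dimensional stalks your ``de Rham cohomology of the flat local system'' is not even defined), or else a gluing argument over a cover. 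The paper sidesteps this entirely: it constructs the spectral sequence from the \v{C}ech--de Rham double complex of a good cover (Lemma \ref{lem : Mayer-Vietoris principle} and Proposition \ref{pro : Leray}, following \cite[Theorems 4.14 and 4.18]{BottTu}), so that $E_1^{p,q}=C^p(\mathscr{U};\mathscr{H}^q_\nabla)$ holds by construction and $E_2^{p,q}=\HH^p(\mathscr{U};\mathscr{H}^q_\nabla)$; the only nontrivial input is that the presheaf $\mathscr{H}_\nabla$ is locally constant, which is Lemma \ref{lem : cohomology presheaf locally constant for lab}, resting on local triviality and \cite[Theorem 2]{Cr_VEst}, with no finiteness hypotheses. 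Note also a misattribution: completeness of the Ehresmann connection is not what makes your filtration bounded or convergent (any such first-quadrant filtration converges); its role, via Theorem \ref{thm : LAB iff}, is to yield local triviality, hence the local constancy of $\mathscr{H}_\nabla$ and the monodromy description. If you replace your first step by the \v{C}ech--de Rham construction, the rest of your argument goes through verbatim.
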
\noindent See Theorem \ref{thm : localization of cohomology}.

\vspace{0.2cm}

The paper is organized as follows:

\vspace*{0.1cm}

\noindent \underline{Section \ref{sec : Submersions by Lie algebroids}} introduces our main object of study, \emph{submersions by Lie algebroids}. We discuss their basic structure and morphisms, and the crucial fact for what follows -- namely, that they pull back under any smooth map. It is there that we prove Theorems A and B.

\vspace*{0.1cm}

\noindent The notions of local triviality and Ehresmann connections are discussed in \underline{Section \ref{sec : Local triviality and completeness}}, where we prove Theorem C, characterizing locally trivial submersions by Lie algebroids as those which admit complete Ehresmann connections. This leads to the system of local coefficients of \underline{Section \ref{sec : The system of local coefficients}}, where we prove Theorem D concerning its monodromy. 

\vspace*{0.1cm}

\noindent Finally, in \underline{Section \ref{sec : Localization}}, in which we discuss a spectral sequence naturally associated with locally trivial submersions by Lie algebroids, and which leads to Theorem E concerning the localization of cohomology classes to fibres.

\vspace{0.1cm}

\subsection*{Acknowledgement}
Work partially supported by the Nederlandse Organisatie voor Wetenschappelijk Onderzoek (Vrije Competitie grant ``Flexibility and Rigidity of Geometric Structures'' 612.001.101) and by IMPA (CAPES-FORTAL project). I would like to thank Olivier Brahic and Ori Yudilevich for the many improvements they suggested, and David Mart\'inez-Torres and Rui Loja Fernandes for many interesting conversations. Above all, I would like to thank Ioan ~M\u{a}rcu\cb{t}, in discussions with whom most of the present project was conceived and developed.

\section{Submersions by Lie algebroids}\label{sec : Submersions by Lie algebroids}

Recall from Definition 1 that a {\bf submersion by Lie algebroids} $\mathfrak{S}=(A,p)$ consists of a Lie algebroid $A$ on the total space of a surjective submersion $p:\Sigma\to M$, with the property that each fiber $\Sigma_x:=p^{-1}(x)$ is transverse to $A$.

% Clearly, every Lie algebroid $A$ on $M$ may be regarded as a submersion by Lie algebroids $\mathfrak{S}_0(A)=(A,\star)$, where $\star : M \to \{\ast\}$ is the map to the one-point space.

\begin{lemma}
Let $p:\Sigma\to M$ be a surjective submersion, $V(\Sigma):=\ker p_*$ its vertical bundle, and $A$ a Lie algebroid on $\Sigma$. Then the induced morphism of Lie algebroids $(\tau_A,p) : A \to TM$, $\tau_A=p_*\varrho_A$ is surjective exactly when $\mathfrak{S}=(A,p)$ is a submersion by Lie algebroids, in which case $K(A):=\ker \tau_A$ is a Lie subalgebroid of $A$ with vertical anchor.
\end{lemma}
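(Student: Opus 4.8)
The plan is to reduce every assertion to the single structural fact that the vertical bundle $V(\Sigma)=\ker p_*$ is involutive. First I would record that $(\tau_A,p)$ is automatically a morphism of Lie algebroids: it factors as the composition $A \xrightarrow{\varrho_A} T\Sigma \xrightarrow{p_*} TM$ of the anchor $\varrho_A$, which is a morphism covering $\id_\Sigma$, with the tangent map $p_*$, which is a morphism covering $p$; a composition of Lie algebroid morphisms is one. So the only substantive content is the equivalence and the subalgebroid claim.

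For the equivalence, fix $\sigma \in \Sigma_x$ and note that $T_\sigma\Sigma_x=\ker(p_*)_\sigma=V(\Sigma)_\sigma$. Transversality of $\Sigma_x$ to $A$ at $\sigma$ means $\varrho_A(A_\sigma)+V(\Sigma)_\sigma=T_\sigma\Sigma$. Since $p$ is a submersion, $(p_*)_\sigma:T_\sigma\Sigma\to T_xM$ is surjective with kernel $V(\Sigma)_\sigma$; applying it to this sum shows transversality forces $\tau_A(A_\sigma)=(p_*)_\sigma\varrho_A(A_\sigma)=T_xM$, i.e. fibrewise surjectivity of $\tau_A$ at $\sigma$. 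Conversely, given $v\in T_\sigma\Sigma$ and $a\in A_\sigma$ with $\tau_A(a)=(p_*)_\sigma v$, one has $v-\varrho_A(a)\in\ker(p_*)_\sigma=V(\Sigma)_\sigma$, whence $v\in\varrho_A(A_\sigma)+V(\Sigma)_\sigma$; this supplies the reverse containment. Running $\sigma$ over $\Sigma$ yields the stated equivalence.

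Assume now $\tau_A$ is surjective. Being a fibrewise-surjective bundle map onto $TM$, it has constant rank $\dim M$, so $K(A)=\ker\tau_A$ is a vector subbundle of corank $\dim M$. To see that it is a Lie subalgebroid it suffices to check that $\Gamma(K(A))$ is closed under $[\cdot,\cdot]_A$. Given $a,b\in\Gamma(K(A))$, the condition $\tau_A(a)=\tau_A(b)=0$ says precisely that $\varrho_A(a),\varrho_A(b)$ are sections of $V(\Sigma)$. Since $\varrho_A$ is a bracket homomorphism on sections, $\tau_A([a,b]_A)=p_*[\varrho_A(a),\varrho_A(b)]$, and involutivity of $V(\Sigma)$ forces $[\varrho_A(a),\varrho_A(b)]\in\Gamma(V(\Sigma))=\ker p_*$; hence $\tau_A([a,b]_A)=0$ and $[a,b]_A\in\Gamma(K(A))$. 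Finally, the anchor of $K(A)$ is the restriction $\varrho_A|_{K(A)}$, and for $k\in K(A)$ we have $p_*\varrho_A(k)=\tau_A(k)=0$, i.e. $\varrho_A(k)\in V(\Sigma)$, so this anchor is vertical. The whole argument is a matter of unwinding definitions; the only point requiring care is the bracket computation, where one must correctly combine the Lie-algebra morphism property of the anchor with the involutivity of $\ker p_*$.
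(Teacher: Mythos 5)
Your proof is correct and follows essentially the same route as the paper: you factor $(\tau_A,p)$ as the composition $(p_*,p)\circ(\varrho_A,\id)$, identify fibrewise surjectivity of $\tau_A$ with the transversality condition $V(\Sigma)+\varrho_A(A)=T\Sigma$ via exactly the pointwise argument the paper compresses into one line, and establish the kernel claim. Your explicit bracket computation $\tau_A([a,b]_A)=p_*[\varrho_A(a),\varrho_A(b)]=0$ via involutivity of $V(\Sigma)$ simply supplies the details behind the paper's unproved assertion ``This implies that $K(A)=\ker\tau_A$ is a Lie subalgebroid,'' which is a welcome addition rather than a departure.
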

\begin{proof}
$(\tau_A,p) : A \to TM$ is a morphism of Lie algebroids since it is the composition of morphisms of Lie algebroids $(p_*,p):T\Sigma \to TM$ and $(\varrho_A,\id):A \to T\Sigma$, and it is surjective exactly when $V(\Sigma) + \varrho_A(A) = T\Sigma$, i.e., when $\mathfrak{S}=(A,p)$ is a submersion by Lie algebroids. This implies that $K(A)=\ker \tau_A$ is a Lie subalgebroid, and that $\varrho_A:K(A) \to T\Sigma$ factors through $V(\Sigma) \subset T\Sigma$.
\end{proof}

A submersion by Lie algebroids $\mathfrak{S}=(A,p)$ thus induces an exact sequence of vector bundles over $\Sigma$:
\begin{align*}
0 \rmap K(A) \rmap  A \stackrel{\tau_A}{\rmap} p^*(TM) \rmap 0
\end{align*}

\begin{lemma}\label{lem : path and bundles of lalgs}
 Let $\phi:\Sigma \to N$ a smooth map transverse to a Lie algebroid $B$ on $N$. Then the following conditions are equivalent:
 \begin{enumerate}[i)]
  \item $\phi_x:\Sigma_x \to N$ is transverse to $B$ for all $x \in M$;
  \item $A:=\phi^!(B)$ turns $p:\Sigma \to M$ into a submersion by Lie algebroids $\mathfrak{S}(\phi,B):=(A,p)$.
 \end{enumerate}
\end{lemma}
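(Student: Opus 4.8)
The plan is to reduce the claimed equivalence to a fibrewise statement in linear algebra and then appeal to Lemma 1. Recall that the pullback $A=\phi^!(B)$ has underlying bundle $A_s=\{(v,b)\in T_s\Sigma\times B_{\phi(s)} : \dd\phi_s(v)=\varrho_B(b)\}$, with anchor $\varrho_A$ the projection $(v,b)\mapsto v$; the standing transversality of $\phi$ is what guarantees constant rank, so that $A$ is a genuine Lie algebroid on $\Sigma$. Consequently, at each $s\in\Sigma$,
\[
\varrho_A(A_s)=\{v\in T_s\Sigma : \dd\phi_s(v)\in\varrho_B(B_{\phi(s)})\}=(\dd\phi_s)^{-1}\big(\varrho_B(B_{\phi(s)})\big).
\]
By Lemma 1, condition ii) holds exactly when $V(\Sigma)+\varrho_A(A)=T\Sigma$, i.e.\ fibrewise
\[
V_s(\Sigma)+(\dd\phi_s)^{-1}\big(\varrho_B(B_{\phi(s)})\big)=T_s\Sigma\qquad(s\in\Sigma).
\]
On the other hand, since $T_s\Sigma_x=V_s(\Sigma)$ for $x=p(s)$, condition i) reads $\dd\phi_s\big(V_s(\Sigma)\big)+\varrho_B(B_{\phi(s)})=T_{\phi(s)}N$ for every $s$.

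Next I would pass to the quotient $Q_s:=T_{\phi(s)}N/\varrho_B(B_{\phi(s)})$, writing $\pi_s$ for the projection and $g_s:=\pi_s\circ\dd\phi_s:T_s\Sigma\to Q_s$. Then $\ker g_s=(\dd\phi_s)^{-1}\big(\varrho_B(B_{\phi(s)})\big)$, so the fibrewise form of ii) becomes $V_s(\Sigma)+\ker g_s=T_s\Sigma$, whereas i) becomes $g_s\big(V_s(\Sigma)\big)=Q_s$. Crucially, the hypothesis that $\phi$ is (globally) transverse to $B$ says exactly that $\dd\phi_s(T_s\Sigma)+\varrho_B(B_{\phi(s)})=T_{\phi(s)}N$, that is, that $g_s$ is surjective.

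It then remains to record the elementary fact that, for a surjective linear map $g:E\to Q$ and a subspace $V\subseteq E$, one has $g(V)=Q$ if and only if $V+\ker g=E$: if $g(V)=Q$ then every $e\in E$ satisfies $g(e)=g(v)$ for some $v\in V$, whence $e-v\in\ker g$ and $e\in V+\ker g$; conversely $V+\ker g=E$ forces $Q=g(E)=g(V)+g(\ker g)=g(V)$. Applying this with $E=T_s\Sigma$, $V=V_s(\Sigma)$ and $g=g_s$ (surjective by the previous paragraph) gives the equivalence of i) and ii) fibrewise, hence globally. The only substantive points are the identification of $\varrho_A(A_s)$ with $(\dd\phi_s)^{-1}\big(\varrho_B(B_{\phi(s)})\big)$ and the recognition that global transversality of $\phi$ is precisely the surjectivity of $g_s$ needed to run the converse direction of the linear-algebra fact; the remainder is routine.
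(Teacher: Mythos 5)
Your proposal is correct and is essentially the paper's own argument in cleaner packaging: the paper chases elements fibrewise using the explicit description $\phi^!(B)_s = \{(v,b) : \dd\phi_s(v)=\varrho_B(b)\}$ together with the standing transversality of $\phi$ (in the direction ii) $\Rightarrow$ i)), which is exactly your identification of $\varrho_A(A_s)$ with $(\dd\phi_s)^{-1}(\varrho_B(B_{\phi(s)}))$ plus the surjectivity of $g_s$. Your quotient-space formulation merely abstracts the paper's two element-chases into the elementary fact that $g(V)=Q \Leftrightarrow V+\ker g = E$ for surjective $g$; no new idea or gap either way.
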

% \[
% \xymatrix{
% A \ar[d] \ar[r] & B \ar[d]\\
% \Sigma \ar[d]_p \ar[r] & N\\
% M &}
% \]
\begin{proof}
 Let $x=p(y) \in M$ and let $u \in T_y\Sigma$. Assuming i), there exist $v \in V(\Sigma)_y$ and $b \in B_{\phi_(y)}$, such that
 \[
  \phi_*(u)=\phi_{x*}(v)+\varrho_{B}(b).
 \]This implies that $a:=(u-v,b) \in A_y$, and that
 \[
  a:=(u-v,b) \in A_{y}, \quad u = \varrho_{A}(a) + (v,0) \in \varrho_{A}(A) + V(\Sigma),
 \]which shows that ii) holds. Conversely, assume ii) and denote by $(\Phi,\phi):\phi^!(A) \to A$ the pullback morphism of Lie algebroids. Because $\phi$ is transverse to $B$, for each $w \in T_{\phi(y)}N$, there exist $b \in B_{\phi(y)}$ and $u \in T_{y}\Sigma$ such that \[w = \phi_*(u)+\varrho_{B}(b).\]Because $\mathfrak{S}=(\phi^!(B),p)$ is a submersion by Lie algebroids, there exist $a \in A_{y}$ and $v \in V(\Sigma)_y$ such that\[u = \varrho_{A}(a) + v.\]This implies that
 \begin{align*}
  w & = \phi_*(\varrho_{A}(a) + v)+\varrho_{B}(b) = \phi_*(v)+\varrho_{B}(\Phi(a)+b) \\ & = \phi_{x*}(v)+\varrho_{B}(\Phi(a)+b)
 \end{align*}and so i) holds.
\end{proof}

\begin{example}
 A surjective submersion $p:\Sigma \to M$ defines a submersion by Lie algebroids $\mathfrak{S}(p,TM)$. A transitive Lie algebroid $A$ on $M$ defines a submersion by Lie algebroids $\mathfrak{S}(\id_M,A)$.
\end{example}

For later reference, we state the following simple consequence of Lemma \ref{lem : path and bundles of lalgs}:

\begin{corollary}[Transversality and rescaling]\label{cor : lalg and scalar}
 Let $p:E \to X$ be a vector bundle, with scalar multiplication map $m:E \times \mathbb{R} \to E$. Then for a Lie algebroid $A$ on $E$, the following conditions are equivalent:
 \begin{enumerate}[i)]
  \item The zero section $X \inc E$ is transverse to $A$;
  \item for all $t \in \mathbb{R}$, $m_t:E \to E$ is transverse to $A$;
  \item $m^!(A)$ turns $\mathrm{pr}:E \times \mathbb{R} \to \mathbb{R}$ into a submersion by Lie algebroids.
 \end{enumerate}
\end{corollary}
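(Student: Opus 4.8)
The plan is to deduce the corollary from Lemma \ref{lem : path and bundles of lalgs}, applied in the situation $\Sigma = E \times \mathbb{R}$, $M = \mathbb{R}$, $p = \mathrm{pr}$, $N = E$, $B = A$ and $\phi = m$. Under these identifications the manifold fibre $\mathrm{pr}^{-1}(t) = E \times \{t\}$ carries the restriction $\phi_t = m_t$, so that the first condition of that lemma (transversality of every $\phi_t = m_t$) is precisely condition ii) of the corollary, while its second condition ($m^!(A)$ making $\mathrm{pr}$ a submersion by Lie algebroids) is condition iii). Consequently, as soon as the standing hypothesis of Lemma \ref{lem : path and bundles of lalgs} --- that $m$ be transverse to $A$ --- is available, the equivalence ii) $\Iff$ iii) follows at once.

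The first thing I would establish is the elementary equivalence i) $\Iff$ ii). For $t \neq 0$ the map $m_t$ is a diffeomorphism of $E$, hence a submersion, and so automatically transverse to $A$; thus in ii) only the value $t = 0$ carries content. Since $m_0 = i \circ p$ factors through the zero section $i : X \inc E$, and $p$ is a submersion, the image of $D(m_0)_e$ at any $e \in E_x$ equals $i_*(T_x X) \subset T_{0_x} E$, independently of $e$. Hence $m_0$ is transverse to $A$ exactly when $i_*(T_x X) + \varrho_A(A_{0_x}) = T_{0_x} E$, which is transversality of the zero section; this gives i) $\Iff$ ii).

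It then remains to secure the transversality hypothesis of Lemma \ref{lem : path and bundles of lalgs} in each direction. If i), equivalently ii), holds, I would note that over $t = 0$ the image of $Dm_{(e,0)}$ on $T_e E \oplus \mathbb{R}$ is $i_*(T_x X) + \mathbb{R}\,e$, which already contains $i_*(T_x X)$, so transversality of the zero section forces $m$ itself to be transverse to $A$ there; away from $t = 0$ the map $m$ is a submersion, so $m$ is transverse to $A$ everywhere and the lemma yields iii). Conversely, if iii) holds then the pullback $m^!(A)$ is defined, which already presupposes that $m$ be transverse to $A$, so the lemma applies directly and returns ii).

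The one point requiring care is precisely this bookkeeping of the standing hypothesis: the full map $m$ is not a submersion along $\{t = 0\}$, so its transversality to $A$ there is not automatic, and it is exactly the zero-section condition i) that repairs it. I expect the short differential computation showing that the image of $Dm_{(e,0)}$ equals $i_*(T_x X) + \mathbb{R}\,e$ to be the only place needing attention, the rest being a transcription of Lemma \ref{lem : path and bundles of lalgs}.
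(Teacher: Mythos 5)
Your proof is correct and follows essentially the same route as the paper's: reduce condition ii) to $t=0$ via the diffeomorphisms $m_t$ ($t\neq 0$), identify the image of $m_{0*}$ with $TX$ to get i) $\Iff$ ii), and invoke Lemma \ref{lem : path and bundles of lalgs} for ii) $\Iff$ iii). Your explicit verification of the lemma's standing hypothesis --- that $m$ itself is transverse to $A$ along $\{t=0\}$, via the computation of the image of $Dm_{(e,0)}$ --- is a point the paper's one-line proof leaves implicit, and it is handled correctly (indeed, the containment $i_*(T_xX)\subset \mathrm{im}\,Dm_{(e,0)}$ already suffices, without the radial term $\mathbb{R}\,e$).
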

\begin{proof}
 Just note that $m_t:E \to E$ is a diffeomorphism for $t \neq 0$, so ii) holds iff $m_0$ is transverse to $A$; and since $m_{0*}(TE)=TX$, i) and ii) are equivalent, and ii) $\Leftrightarrow$ iii) by Lemma \ref{lem : path and bundles of lalgs}.
\end{proof}

A {\bf morphism} $\Phi : \mathfrak{S}_0\to \mathfrak{S}_1$ between submersion by Lie algebroids  $\mathfrak{S}_i= (A_i,p_i)$ is a morphism of Lie algebroids $(\Phi,\varphi):A_0 \to A_1$, in which $\varphi:\Sigma_0 \to \Sigma_1$ covers a smooth map $\phi:M_0 \to M_1$.

\begin{example}
 The anchor defines a morphism $\varrho_A:\mathfrak{S}=(A,p)\to\mathfrak{S}(p,p,TM)$.
\end{example}

A crucial feature of submersions by Lie algebroids is that they pull back under arbitrary smooth maps into the base.

\begin{lemma}\label{lem : Pullback submersion by Lie algebroids}
Let $\mathfrak{S}=(A,p)$ be a submersion by Lie algebroids on $M$, and for a smooth map $\phi:N \to M$, denote by $\overline{p}:\phi^*(\Sigma) \to N$ the pullback submersion, and by $\varphi:\phi^*(\Sigma) \to:\Sigma$ the pullback map. Then $\phi^!\mathfrak{S}=(\varphi^!(A),\overline{p})$ is a submersion by Lie algebroids on $N$, and the pullback morphism of Lie algebroids $(\Phi,\varphi):\varphi^!(A) \to A$ defines a pullback morphism of submersions by Lie algebroids $\Phi:\phi^!\mathfrak{S} \to \mathfrak{S}$.
\end{lemma}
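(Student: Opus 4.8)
The plan is to reduce both assertions to Lemma~\ref{lem : path and bundles of lalgs}, applied to the surjective submersion $\overline{p}:\phi^*(\Sigma) \to N$ together with the smooth map $\varphi:\phi^*(\Sigma) \to \Sigma$, viewed as a candidate transverse map to the Lie algebroid $A$ on $\Sigma$. First I would record the elementary features of the pullback square: since $p$ is a surjective submersion, $\phi^*(\Sigma) = N \times_M \Sigma$ is a smooth manifold and $\overline{p}$ is again a surjective submersion, while each fibre $\overline{p}^{-1}(n)$ is canonically identified with $\Sigma_{\phi(n)} = p^{-1}(\phi(n))$ in such a way that the restriction $\varphi_n := \varphi|_{\overline{p}^{-1}(n)}$ becomes the inclusion $\Sigma_{\phi(n)} \inc \Sigma$.

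With these identifications the hypotheses of Lemma~\ref{lem : path and bundles of lalgs} are almost free. Because $\mathfrak{S}=(A,p)$ is a submersion by Lie algebroids, every fibre $\Sigma_{\phi(n)}$ is transverse to $A$; under the identification above this says precisely that each $\varphi_n$ is transverse to $A$, which is condition i) of that lemma. The same fact delivers the standing transversality of $\varphi$ itself: writing $\varphi_n = \varphi \circ \iota_n$ for the fibre inclusion $\iota_n:\overline{p}^{-1}(n) \inc \phi^*(\Sigma)$, the image of $\varphi_{n*}$ sits inside that of $\varphi_*$, so $\mathrm{im}(\varphi_{n*}) + \varrho_A(A) = T\Sigma$ forces $\mathrm{im}(\varphi_*) + \varrho_A(A) = T\Sigma$. (Concretely, the $\overline{p}$-vertical directions of $\phi^*(\Sigma)$ are carried by $\varphi_*$ onto $V(\Sigma)$, which together with $\varrho_A(A)$ already spans $T\Sigma$.) In particular $\varphi$ is transverse to $A$ and $\varphi^!(A)$ is well-defined.

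The implication i)~$\Rightarrow$~ii) of Lemma~\ref{lem : path and bundles of lalgs} now shows that $\varphi^!(A)$ turns $\overline{p}:\phi^*(\Sigma) \to N$ into a submersion by Lie algebroids, which is the first assertion. For the second I would merely unwind the definition of a morphism of submersions by Lie algebroids: the pullback morphism $(\Phi,\varphi):\varphi^!(A) \to A$ is a morphism of Lie algebroids by the standard theory of pullbacks (already available once $\varphi$ is transverse to $A$), and $\varphi$ covers $\phi:N \to M$ because the pullback square commutes, $p \circ \varphi = \phi \circ \overline{p}$. These are exactly the two requirements in the definition, so $\Phi:\phi^!\mathfrak{S} \to \mathfrak{S}$ is a morphism of submersions by Lie algebroids.

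I do not anticipate a genuine obstacle here: the argument is bookkeeping around the pullback square fed into Lemma~\ref{lem : path and bundles of lalgs}. The one point meriting care is the transversality of $\varphi$ to $A$, needed both to form $\varphi^!(A)$ and to license the lemma; as shown above it is a free consequence of the fibrewise transversality already encoded in the hypothesis that $\mathfrak{S}$ is a submersion by Lie algebroids.
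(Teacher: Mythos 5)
Your proposal is correct, but it is organized differently from the paper's proof. The paper verifies Lemma \ref{lem : Pullback submersion by Lie algebroids} directly, without citing Lemma \ref{lem : path and bundles of lalgs}: it first shows that $\varphi$ is transverse not merely to $A$ but to \emph{any} subbundle $C \subset A$ with $V(\Sigma)+\varrho_A(C)=T\Sigma$ (using exactly your observation that $\varphi_*$ carries $V(\phi^*(\Sigma))_y$ onto $V(\Sigma)_{\varphi(y)}$), and then checks by hand that fibres of $\overline{p}$ are transversals: given $w \in T_y\phi^*(\Sigma)$, decompose $\varphi_*(w)=v+\varrho_A(a)$, lift $v=\varphi_*(u)$ with $u \in V(\phi^*(\Sigma))_y$, and observe $a':=(w-u,a)\in\varphi^!(A)_y$ with $w=u+\varrho_{\varphi^!(A)}(a')$ --- essentially re-running the i) $\Rightarrow$ ii) computation from the proof of Lemma \ref{lem : path and bundles of lalgs} in this special case. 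You instead feed the pullback square into Lemma \ref{lem : path and bundles of lalgs} itself, with $\overline{p}$ as the submersion and $\varphi$ as the transverse map, and your verifications of its hypotheses (surjectivity of $\overline{p}$; transversality of $\varphi$ to $A$ via the vertical directions; fibrewise transversality via the identification of $\varphi_n$ with the inclusion $\Sigma_{\phi(n)} \inc \Sigma$, which is transverse because $\mathfrak{S}$ is a submersion by Lie algebroids) are all correct, as is the bookkeeping for the morphism claim. Your reduction is more economical and arguably explains what Lemma \ref{lem : path and bundles of lalgs} is for. What the paper's direct route buys, and your version does not explicitly deliver, is the stronger transversality statement for arbitrary complements $C$: the text invokes ``(the proof of)'' this lemma immediately afterwards to conclude that an Ehresmann connection $C$ for $\mathfrak{S}$ pulls back to the connection $\phi^!C = TN \times_{TM} C$ for $\phi^!\mathfrak{S}$. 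Your parenthetical remark already contains the needed ingredient --- $\varphi_*$ maps $\overline{p}$-verticals onto $V(\Sigma)$, and $V(\Sigma)+\varrho_A(C)=T\Sigma$ --- so this is a one-line addendum to your argument, but as written your proof would not support that later citation.
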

\begin{proof}
Let $C \subset A$ be any vector subbundle for which $V(\Sigma)+\varrho_{A}(C) = A$. Because
\[
 \varphi_*V(\phi^*(\Sigma))_{y} = V(\Sigma)_{\varphi(e')}, \quad V(\Sigma)_{x} + \varrho_{A}(A)_{x} = T_{x}\Sigma, \quad y \in \phi^*(\Sigma), \quad x \in \Sigma
\]we have that the pullback map $\varphi:\phi^*(\Sigma) \to \Sigma$ is transverse to $C$, i.e. $\varphi_*(T_y\phi^*(\Sigma))+\varrho_A(C_{\varphi(y)})=T_{\varphi(y)}\Sigma$. In particular, $\varphi$ is transverse to $A$, whence the pullback morphism of Lie algebroids $(\Phi,\varphi):\varphi^!(A) \to A$ is defined.

We check that fibres of $\overline{p}$ are transversals in $\varphi^!(A)$: let $w \in T_{y}\phi^*(\Sigma)$, and decompose $\varphi_*(w) = v + \varrho_A(a)$ with $v \in V(\Sigma)_{\varphi(y)}$ and $a \in A_{\varphi(y)}$. Now, $v=\varphi_*(u)$ for some $u \in V(\phi^*(\Sigma))_{y}$ implies that $\varphi_*(w-u) = \varrho_A (a)$, which is to say that $a':=(w-u,a) \in \varphi^!(A)_{y}$, and therefore $w = u+\varrho_{\varphi^!(A)}(a')$. This proves that $\phi^!\mathfrak{S}=(\varphi^!(A),\overline{p})$ is a submersion by Lie algebroids, and by construction $(\Phi,\varphi,\phi):\phi^!\mathfrak{S} \to \mathfrak{S}$ is a morphism of submersions by Lie algebroids. 
\end{proof}

We will call $\phi^!\mathfrak{S}$ the {\bf pullback submersion by Lie algebroids}, and $(\Phi,\varphi,\phi):\phi^!\mathfrak{S} \to \mathfrak{S}$  the {\bf pullback morphism} of submersions by Lie algebroids. 

\noindent The next theorem gives a normal form for transversals in Lie algebroids which is much in the spirit of \cite{BLM}. 

\begin{theorem}[Normal form for Lie algebroid transversals]\label{thm : normal forms for transversals}
 If $X \subset (M,A)$ is a Lie algebroid transversal, then for every tubular neighborhood $p:E \to X$ of $X$ in $M$, there exists an open neighborhood $U \subset E$ of $X$ and a Lie algebroid isomorphism
 \[(\Phi,\varphi) : p^!i^!(A)|_U \diffto A|_{\varphi(U)}\]extending $\widetilde{i}$ on $i^!(A) \subset p^!i^!(A)$.
\end{theorem}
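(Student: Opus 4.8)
The plan is to realize both Lie algebroids as the endpoint fibres of a single submersion by Lie algebroids over an interval, and to connect them by a Lie algebroid flow. First I would use the given tubular neighborhood $p:E \to X$ to transport $A$ to a Lie algebroid on a neighborhood of the zero section in $E$; under this identification $X$ becomes the zero section, $i:X \inc M$ the zero-section inclusion, and $p$ the bundle projection. Since $X$ is a transversal, the zero section is transverse to $A$, so Corollary \ref{cor : lalg and scalar} applies: the scalar multiplication $m:E \times \R \to E$, $m(e,t)=te$, turns $\pr:E \times \R \to \R$ into a submersion by Lie algebroids $\mathfrak{S}_m=(m^!(A),\pr)$ on the open set where $te$ lies in the domain of $A$, which contains a neighborhood of $X \times [0,1]$.

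The key observation is the identification of the endpoint fibres. Over $t=1$ one has $m_1=\id_E$, whence $m_1^!(A)=A$; over $t=0$ one has $m_0 = i\circ p$ (zero section followed by projection), so by functoriality of the pullback $m_0^!(A)=p^!i^!(A)$. Thus trivializing $\mathfrak{S}_m$ over $[0,1]$ yields precisely the isomorphism sought, where the slice-restriction $m^!(A)|_{\pr^{-1}(t)}$ recovers $m_t^!(A)$ because $m|_{E \times \{t\}}=m_t$.

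To perform the trivialization I would choose an Ehresmann connection for $\mathfrak{S}_m$, i.e. a section $X_\sigma \in \Gamma(m^!(A))$ lifting $\partial_t$ through $\tau$, arranged so that along the zero section $X_\sigma = (\partial_t,0)$; this is a legitimate choice since $m_*(\partial_t)$ vanishes on $X \times \R$, so the pair $(\partial_t,0)$ indeed lies in $m^!(A)$ there, and it is then extended to a global section by a partition-of-unity splitting. The Lie algebroid flow $(\Phi_s,\phi_s)$ of $X_\sigma$ covers the flow $\phi_s$ of $\varrho(X_\sigma)$, which carries $\pr^{-1}(t)$ to $\pr^{-1}(t+s)$; hence $\Phi_s$ restricts to Lie algebroid isomorphisms $m_t^!(A) \diffto m_{t+s}^!(A)$. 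Taking $t=0$ and $s=1$ produces the candidate isomorphism $p^!i^!(A) \to A$.

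The main obstacle is completeness: the flow $\phi_s$ need not be defined up to time $1$ on all of $E \times \{0\}$. But because $\varrho(X_\sigma)$ restricts on $X \times \R$ to $\partial_t$, the zero section flows trivially, $(0_x,0)\mapsto(0_x,s)$, and so is defined for all time; openness of the flow domain then furnishes an open neighborhood $U \subset E$ of $X$ on which $\phi_1$ is defined, and $\varphi:=\phi_1|_U$, $\Phi:=\Phi_1|_U$ give the isomorphism $p^!i^!(A)|_U \diffto A|_{\varphi(U)}$. Finally, since the chosen $X_\sigma$ has vanishing $A$-component along the zero section, the flow fixes $X$ and acts trivially in the $A$-direction there, so the isomorphism restricts on $X$ to the canonical morphism $i^!(A)\to A$, i.e. it extends $\widetilde{i}$. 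Verifying this last compatibility, together with the admissibility of the adapted choice of $X_\sigma$, are the points requiring care.
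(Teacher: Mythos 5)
Your proposal is correct and follows essentially the same route as the paper's own proof: both invoke Corollary \ref{cor : lalg and scalar} to turn $m^!(A)$ into a submersion by Lie algebroids over the interval, identify the endpoint fibres as $m_0^!(A)=p^!i^!(A)$ and $m_1^!(A)=A$, and trivialize by flowing a section that projects to $\partial_\epsilon$ under $\tau_{m^!(A)}$ and equals $(\partial_\epsilon,0)$ along $X\times[0,1]$, with completeness along the zero section plus openness of the flow domain yielding the neighborhood $U$. Your adapted choice of section is exactly the paper's conditions a) and b), and your closing argument that the flow is the identity on $i^!(A)\subset m_0^!(A)$ matches the paper's justification, so the points you flag as requiring care are indeed the ones the paper handles, in the same way.
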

\begin{proof}
 It follows from Corollary \ref{cor : lalg and scalar} that $m^!(A)$ turns $\pr_2:E \times I \to I$ into a submersion by Lie algebroids, in which the fibre over $\epsilon \in I$ is canonically identified with $m_{\epsilon}^!(A)$, and $m_0^!(A)=p^!i^!(A)$, where $i:X \to E$ is the inclusion. Note also that $X \times I$ is a Lie algebroid transversal in $(E \times I,m^!(A))$, and that
 \[
  \left( X \times I,(i,\mathrm{id})^!m^!(A) \right) = (X,i^!(A)) \times (I,TI).
 \]Hence we can choose a section $a  \in \Gamma(m^!(A))$, such that
 \begin{tasks}(2)
  \task $a$ coincides with $\frac{\bd}{\bd \epsilon}$ along $X \times I$;
  \task $\tau_{m^!(A)}(a) = \frac{\bd}{\bd \epsilon}$
 \end{tasks}
Let $(\Phi_{\epsilon},\varphi_{\epsilon})$ denote the local flow of $a$ (see e.g. \cite[Appendix]{CrFer_Int_Lie}). Because of condition a), if $b \in (i,\mathrm{id})^!m^!(A)$, then $\Phi_{\epsilon}(b)$ is defined up to time $\epsilon=1$; this implies by condition b) that there is an open neighborhood $U \subset E\times \{0\}$ of $X \times \{0\}$, such that $\Phi_{\epsilon}$ induces a path of isomorphisms of Lie algebroids
\[
 (\Phi_{\epsilon},\varphi_{\epsilon}) : m_0^!(A)|_U \diffto m_{\epsilon}^!(A)|_{\varphi_{\epsilon}(U)}, \quad \epsilon \in [0,1],
\]which restricts to the identity on $i^!(A) \subset m_0^!(A)|_U = (p|_U)^!i^!(A)$. Hence at time one we get an isomorphism
\[
 (\Phi_{1},\varphi_{1}) : m_0^!(A)|_U = (p|_U)^!i^!(A) \diffto A|_{\varphi_1(U)} = m_{1}^!(A)|_{\varphi_{1}(U)}. \qedhere
\]
\end{proof}

Let now the Lie algebroid $A$ on $M$ have a representation $\nabla$ on a vector bundle $D$. Then $\mathscr{A}_{\nabla}:=A \oplus D$ has the structure of Lie algebroid on $M$, with bracket and anchor
\[
 \varrho_{\mathscr{A}_{\nabla}}(a,s)=\varrho_A(a), \quad [(a,s),(a',s')]_{\mathscr{A}_{\nabla}}=([a,a']_A,\nabla_as'-\nabla_{a'}s)),
\]and this establishes a bijection between representations of $A$ on $D$ and Lie algebroids $\mathscr{A}_{\nabla}$ in which $D$ sits as a totally intransitive, abelian subalgebroid, with quotient $A$. Then Theorem \ref{thm : normal forms for transversals} has the following immediate version for representations:

\begin{corollary}\label{cor : nf for reps}
For a Lie algebroid transversal $X \subset (M,A)$, and a representation $\nabla:A \curvearrowright D$, the representations $p^!i^!(\nabla)$ and $\nabla$ are isomorphic around $X$, by an isomorphism which is identical on $i^!(\nabla) \subset p^!i^!(\nabla)$.
\end{corollary}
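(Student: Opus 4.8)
The plan is to deduce the corollary by applying Theorem \ref{thm : normal forms for transversals} directly to the Lie algebroid $\mathscr{A}_{\nabla}=A \oplus D$ associated to the representation $\nabla$, and then to translate the resulting Lie algebroid isomorphism back into an isomorphism of representations by means of the bijection recalled just above the corollary.

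First I would observe that $X$ remains a Lie algebroid transversal in $(M,\mathscr{A}_{\nabla})$: since $\varrho_{\mathscr{A}_{\nabla}}(a,s)=\varrho_A(a)$, the anchors of $A$ and $\mathscr{A}_{\nabla}$ have the same image, so transversality of $X$ in $(M,A)$ is equivalent to transversality in $(M,\mathscr{A}_{\nabla})$. Next, the key compatibility to record is that the semidirect-product construction commutes with pullback: for any smooth map $\phi$ transverse to $A$ there is a canonical identification $\phi^!(\mathscr{A}_{\nabla})=\mathscr{A}_{\phi^!\nabla}$, under which the summand $\phi^*D$ corresponds to the $D$-summand. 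This is a direct unwinding of the definitions, since the pullback condition $\phi_*u=\varrho_{\mathscr{A}_{\nabla}}(a,s)=\varrho_A(a)$ does not constrain $s$, so that $\phi^!(\mathscr{A}_{\nabla})=\phi^!(A)\oplus \phi^*D$; applying it to $i$, to $p$, and to their composite identifies $p^!i^!(\mathscr{A}_{\nabla})$ with $\mathscr{A}_{p^!i^!\nabla}$ and $i^!(\mathscr{A}_{\nabla})$ with $\mathscr{A}_{i^!\nabla}$.

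With these identifications in hand, Theorem \ref{thm : normal forms for transversals} applied to $\mathscr{A}_{\nabla}$ produces, on a neighborhood $U$ of $X$, a Lie algebroid isomorphism $(\Phi,\varphi):\mathscr{A}_{p^!i^!\nabla}|_U \diffto \mathscr{A}_{\nabla}|_{\varphi(U)}$ which is the identity on $\mathscr{A}_{i^!\nabla}$. It remains to verify that $\Phi$ respects the structures encoding the representations, i.e. that it carries the $D$-summand isomorphically onto the $D$-summand; it then automatically descends to an isomorphism $p^!i^!(A)\to A$ on quotients, and the two data together constitute an isomorphism of representations.

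This last verification is where I expect the only genuine point to lie, and it is settled by the construction of $\Phi$ in Theorem \ref{thm : normal forms for transversals}: $\Phi$ is the time-one flow of a section $a$ of $m^!(\mathscr{A}_{\nabla})=\mathscr{A}_{m^!\nabla}$, and the $D$-summand is an \emph{abelian ideal} of $\mathscr{A}_{m^!\nabla}$ -- indeed $[(a,s),(0,t)]=(0,\nabla_a t)$ always lands in $D$ -- so that $[a,\Gamma(D)]\subset \Gamma(D)$, and the flow (together with its inverse) preserves $D$ by the usual uniqueness argument for the defining ODE $\bd_{\epsilon}\Phi_{\epsilon}^{\dagger}(b)=\Phi_{\epsilon}^{\dagger}([a,b])$. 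Hence $\Phi$ restricts to a vector bundle isomorphism of the $D$-summands, and the remaining clause, that the isomorphism is identical on $i^!(\nabla)$, is inherited directly from the corresponding clause of Theorem \ref{thm : normal forms for transversals}.
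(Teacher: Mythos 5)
Your proposal is correct, and it travels the same road as the paper -- through the semidirect product $\mathscr{A}_{\nabla}=A\oplus D$, the identification $\phi^!(\mathscr{A}_{\nabla})=\mathscr{A}_{\phi^!\nabla}$, and a time-one flow on $m^!(\mathscr{A}_{\nabla})$ -- but with a different division of labor. The paper does not quote Theorem \ref{thm : normal forms for transversals} as a black box: it reruns that proof with the specific lift $(a,0)\in\Gamma(m^!\mathscr{A}_{\nabla})$ of the section $a$ used for $A$, so that $[(a,0),\Gamma(A)]\subset\Gamma(A)$ and $[(a,0),\Gamma(D)]\subset\Gamma(D)$ make the flow block-diagonal, $\widetilde{\Phi}_{\epsilon}=(\Phi_{\epsilon},\Psi_{\epsilon})$, and the only delicate point is existence up to time one, which the paper settles by remarking that $D$ being totally intransitive forces the flow of $(a,0)$ to be defined exactly where that of $a$ is. You instead apply the theorem verbatim to $\mathscr{A}_{\nabla}$ (so the neighborhood $U$ and time-one existence come for free), at the cost that the section furnished by that proof may carry a nonzero $D$-component, so your flow is a priori only block-triangular: it preserves the ideal $D$ but not necessarily the summand $A\oplus 0$. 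You correctly flag this as the one genuine point and settle it -- $D$ is an ideal for \emph{any} section, flows preserve ideals by the uniqueness argument for the defining ODE, and a $D$-preserving algebroid isomorphism induces a pair (quotient isomorphism, $D$-isomorphism) whose equivariance is automatic from bracket-preservation applied to $[(a,0),(0,t)]=(0,\nabla_a t)$. Both arguments are sound; yours is more modular, reusing the theorem's statement plus a single structural fact about its construction, while the paper's choice of lift makes the splitting of the isomorphism visible at once and avoids the block-triangular-to-representation step.
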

\begin{proof}
As in the proof of Theorem \ref{thm : normal forms for transversals}, consider scalar multiplication $m:E \times [0,1] \to E$ and form the pullback $m^!\mathscr{A}_{\nabla}$, which is canonically identified with $\mathscr{A}_{m^!\nabla}=m^!(A) \oplus m^*(D)$. Again, $\mathfrak{S}(\pr,m,\mathscr{A}_{\nabla})$ is a bundle of Lie algebroids, with $X \times I$ as a Lie algebroid transversal, and
\[\left( X \times I,(i,\mathrm{id})^!m^!(\mathscr{A}_{\nabla}) \right) = (X,i^!(\mathscr{A}_{\nabla})) \times (I,TI) = (X,\mathscr{A}_{i^!\nabla}) \times (I,TI).\]

If $a \in \Gamma(m^!(A))$ is the section in the proof of Theorem \ref{thm : normal forms for transversals}, the local flow $\widetilde{\Phi}_{\epsilon}$ of $(a,0) \in \Gamma(m^!\mathscr{A}_{\nabla})$ is of the form
\[
 \widetilde{\Phi}_{\epsilon} = \left( \begin{array}{cc}
                                       \Phi_{\epsilon} & 0 \\
                                       0 & \Psi_{\epsilon}
                                      \end{array}
 \right)
\]simply because
\[[(a,0),\Gamma(A)]_{\mathscr{A}_{\nabla}} \subset \Gamma(A), \quad [(a,0),\Gamma(D)]_{\mathscr{A}_{\nabla}} \subset \Gamma(D).\]On the other hand, because $D$ is totally intransitive, the local flow $\widetilde{\Phi}_{\epsilon}$ is defined exactly where $\Phi_{\epsilon}$ is defined; in particular, it is defined up to time $\epsilon=1$ on an open neighborhood $U \subset E \times \{0\}$ of $X \times \{0\}$, and at time one it defines an isomorphism of representations
\[
  (\Phi_1,\Psi_1,\varphi_1) : p^!i^!(\nabla)|_U \diffto \nabla|_{\varphi_1(U)},
 \]which restricts to the identity on $i^!(\nabla) \subset p^!i^!(\nabla)$.
\end{proof}

\begin{theorem}[Lie algebroid transversals and cohomology]\label{thm : cohomology concentrated}
For a Lie algebroid transversal $X \subset (M,A)$, and a representation $\nabla:A \curvearrowright D$, there exists an open neighborhood $U$ of $X$ so that the induced map:
\[i^*:\HH^{\bullet}(A|_U;D|_U) \diffto \HH^{\bullet}(i^!(A);i^*(D))\]is an isomorphism.
\end{theorem}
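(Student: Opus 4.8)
The plan is to combine the normal form of Corollary \ref{cor : nf for reps} with a homotopy‑invariance argument for the cohomology of a pullback Lie algebroid. First I would invoke Corollary \ref{cor : nf for reps}: over a suitable tubular neighborhood $p:U \to X$ of $X$, the representation $\nabla|_U$ is isomorphic to $p^!i^!(\nabla)$ by an isomorphism which is the identity on $i^!(\nabla)$. Such an isomorphism of representations induces an isomorphism of the Lie algebroid de Rham complexes
\[
\Omega^\bullet(A|_U;D|_U) \cong \Omega^\bullet(p^!i^!(A);p^*i^*(D)),
\]
and, restricting to the identity along $X$, it intertwines the two restriction maps $i^*$. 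Hence it suffices to treat the pullback situation: writing $B:=i^!(A)$ and $D_0:=i^*(D)$, the task becomes to show that
\[
i^* : \HH^\bullet(p^!B;p^*D_0) \rmap \HH^\bullet(B;D_0)
\]
is an isomorphism, where now $i:X \to U$ is the zero section and $p:U \to X$ the bundle projection.

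Next I would set up the two mutually inverse maps. Since $p \circ i = \id_X$, functoriality of Lie algebroid cohomology gives $i^* \circ p^* = (p \circ i)^* = \id$ on $\HH^\bullet(B;D_0)$, so $p^*$ is a right inverse to $i^*$. It remains to prove that $p^* \circ i^* = (i \circ p)^*$ is the identity on $\HH^\bullet(p^!B;p^*D_0)$. This is precisely a homotopy‑invariance statement: the scalar multiplication $m:U \times I \to U$ deformation retracts $U$ (taken star‑shaped) onto the zero section, with $m_1 = \id_U$ and $m_0 = i \circ p$, and by Corollary \ref{cor : lalg and scalar} this homotopy is covered by a morphism of Lie algebroids $(M,m):m^!(p^!B) \to p^!B$, in which $m^!(p^!B)$ is a submersion by Lie algebroids over $I$.

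To turn this into an equality on cohomology I would build the classical homotopy operator. Following the proof of Theorem \ref{thm : normal forms for transversals}, choose the section $a \in \Gamma(m^!(p^!B))$ projecting to $\frac{\bd}{\bd\epsilon}$ under $\tau_{m^!(p^!B)}$, and set
\[
h(\omega) := \int_0^1 \iota_a\, M^*\omega \; \dd\epsilon,
\]
the integration over $I$ of the contraction of $M^*\omega$ by $a$; since $p \circ m = p \circ \pr$, the coefficient bundle $m^*p^*D_0$ is pulled back from $U$, so $h$ lands in $\Omega^{\bullet-1}(p^!B;p^*D_0)$. The covariant Cartan calculus for the induced representation, $L_a = \dd_\nabla \iota_a + \iota_a \dd_\nabla$, together with the fundamental theorem of calculus in the $I$‑direction, yields the chain homotopy identity
\[
\dd_\nabla h + h\, \dd_\nabla = M_1^* - M_0^* = \id - (i \circ p)^*.
\]
Passing to cohomology gives $(i \circ p)^* = \id$, completing the argument.

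The main obstacle is this last step: making the Lie‑algebroid Cartan calculus with nontrivial coefficients precise — in particular, checking that $L_a$ computes the $\epsilon$‑derivative through $M^*$ and that integration over $I$ interacts with $\dd_\nabla$ as in the ordinary de Rham theory. There is also a minor completeness bookkeeping, since one must choose $U$ at once star‑shaped (so that $m$ maps $U \times I$ into $U$) and small enough for the normal form of Corollary \ref{cor : nf for reps} to hold; both are arranged by passing to a disk subbundle.
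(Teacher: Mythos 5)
Your first step coincides exactly with the paper's: both proofs invoke Corollary \ref{cor : nf for reps} to replace $(A|_U,\nabla)$ by the pullback model $(p^!i^!(A),\,p^!i^!(\nabla))$ via an isomorphism that is the identity over $X$, reducing the theorem to the statement that $i^*$ and $p^*$ are mutually inverse on the cohomology of $p^!B$, $B=i^!(A)$. You diverge in the second half: the paper disposes of this in one line by citing \cite[Theorem 2]{Cr_VEst}, which gives $p^*:\HH(B;D_0)\diffto\HH(p^!B;p^*D_0)$ whenever $p$ has contractible fibres, whereas you reprove this special case from scratch with an explicit homotopy operator. That is a legitimate and genuinely different route --- in effect a self-contained proof of the contractible-fibre instance of Crainic's theorem --- and it buys independence from the citation and, notably, freedom from all flow and completeness issues, since the operator is purely algebraic; the paper's route is shorter and inherits the sharper connectivity-graded statements of \cite{Cr_VEst} for free.

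One imprecision in your key step should be repaired. The relation $p\circ m=p\circ\pr_1$, which you use only to identify the coefficient bundle, does more: by functoriality of $!$-pullbacks it yields a canonical isomorphism $m^!(p^!B)=(p\circ m)^!B=(p\circ\pr_1)^!B\cong p^!B\times TI$. This is what makes $h$ land in the fixed complex $\Omega^{\bullet-1}(p^!B;p^*D_0)$ --- without it, the slice restrictions $j_\epsilon^*$ take values in the varying complexes of $m_\epsilon^!(p^!B)$ --- and it supplies the canonical \emph{flat} lift $\bd_\epsilon$, for which $\bd_\epsilon\bigl(j_\epsilon^*M^*\omega\bigr)=j_\epsilon^*L_{\bd_\epsilon}M^*\omega$ holds and Cartan's formula plus the fundamental theorem of calculus give $\dd_\nabla h+h\,\dd_\nabla=M_1^*-M_0^*$. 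For the section $a$ you import from the proof of Theorem \ref{thm : normal forms for transversals}, this identity is literally false: writing $a=\bd_\epsilon+a'$ with $a'$ in the kernel of $\tau$, your $h$ produces the extra term $\int_0^1 j_\epsilon^*L_{a'}M^*\omega\,\dd\epsilon$, which does not vanish (it is itself of chain-homotopy form, so the cohomological conclusion can be salvaged, but the clean identity requires the flat section). That auxiliary $a$ was needed in Theorem \ref{thm : normal forms for transversals} precisely because no product decomposition was available there; here one exists canonically, and with $a=\bd_\epsilon$ your argument closes, the star-shapedness point being handled as you say by shrinking $U$ to a fibrewise-convex disk subbundle inside the normal-form neighborhood.
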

\begin{proof}
By Corollary \ref{cor : nf for reps}, we can find an isomorphism $(\Phi,\Psi,\phi) : p^!i^!(\nabla) \diffto \nabla|_{\phi(U)}$ which restricts to the identity of $i^!(\nabla)$, where $i:X \inc M$ is the inclusion, and $p:U \to X$ is a tubular neighborhood of $X$ in $M$. Hence the composition
\[
 i^!(\nabla) \stackrel{i}{\rmap} \nabla|_{\varphi(U)} \underset{\simeq}{\stackrel{(\Phi,\Psi,\phi)^{-1}}{\rmap}} p^!i^!(\nabla) \stackrel{p}{\rmap} i^!(\nabla)
\]is the identity, hence the composition of
\begin{align*}
\HH(i^!(A);i^*(D)) \stackrel{p^*}{\rmap} & \HH(p^!i^!(A);p^*i^*(D)) \stackrel{\Phi^{-1*}}{\rmap} \HH(A|_{\varphi(U)};D|_{\varphi(U)}) \stackrel{i^*}{\rmap} \HH(i^!(A);i^*(D)) 
\end{align*}
is also identical. On the other hand, if $p:U \to X$ has contractible fibres, which we can always assume to be the case, then $p^*:\HH(i^!(A);i^*(D)) \to \HH(p^!i^!(A);p^*i^*(D))$ is an isomorphism by \cite[Theorem 2]{Cr_VEst}, whence $i^*$ is an isomorphism as well.
\end{proof}

\section{Local triviality and completeness}\label{sec : Local triviality and completeness}

An {\bf Ehresmann connection} for a submersion by Lie algebroids $\mathfrak{S}=(A,p)$ is a linear splitting $A=K(A)\oplus C$. Such a splitting determines, and is determined by, a vector bundle map over $\Sigma$ of \emph{horizontal lift}:
\[
 \mathrm{hor}_C : p^*(TM) \rmap A, \quad \tau_A \circ \mathrm{hor} = \mathrm{id}_{p^*(TM)}.
\]By (the proof of) Lemma \ref{lem : Pullback submersion by Lie algebroids}, if $C$ is an Ehresmann connection for a submersion by Lie algebroids $\mathfrak{S}=(A,p)$, and $\phi:N \to M$ is a smooth map, with induced pullback morphism of submersions by Lie algebroids $(\Phi,\varphi,\phi):\phi^!\mathfrak{S} \to \mathfrak{S}$, then $\phi^!C:=TN \times_{TM}C$ is an Ehresmann connection for $\phi^!\mathfrak{S}$.

An Ehresmann connection $C$ for $\mathfrak{S}$ is called {\bf complete} if either of the following equivalent conditions is met:
\begin{enumerate}[$\mathrm{C}$1)]
 \item the horizontal lift $\mathrm{hor}_C(u) \in \Gamma(C)$ of a complete vector field $u \in \mathfrak{X}(M)$ is a complete section;
 \item the induced Ehresmann connection $H(C):=\varrho_A(C)$ for the underlying submersion by Lie algebroids $\mathfrak{S}(p)=(TE,p)$ is complete;
 \item for every curve $c:[0,1] \to M$, the flow of any time-dependent section $a_{\epsilon}$ with $\hor_C(\dot{c}(\epsilon))=a_{\epsilon}(c(\epsilon))$ defines a {\bf parallel transport} along $c$, i.e., a smooth path of isomorphisms of Lie algebroids
\[
 \Phi^a_{\epsilon}:i_{c(0)}^!\mathfrak{S} \diffto i_{c(\epsilon)}^!\mathfrak{S}, \quad \epsilon \in [0,1],
\]where $i_x$ denotes the inclusion of a point $x \in M$.
\end{enumerate}

Evidently, while every submersion by Lie algebroids admits an Ehresmann connection, not every such submersion admits a \emph{complete} Ehresmann connection:

\begin{example}
 Let $\Sigma = \mathbb{R}^2 \times \R$, $M=\R$, and $p\Sigma \to M$ the canonical projection. Let also $B$ be the cotangent Lie algebroid of a bivector $\pi$ on $N = \mathbb{R}^2$, which is supported on the unit disk, and is nondegenerate at the origin. Then $\mathfrak{S}(m,B)$ is a submersion by Lie algebroids, in which the only transitive fibre is the one over zero. This implies that a complete Ehresmann connection for $\mathfrak{S}$ cannot exist. By constrast, a complete Ehresmann connection for $\mathfrak{S}(m,TN)$ exists.
\end{example}

\begin{proposition}\label{pro : pullback of complete}
Let $C$ be an Ehresmann connection for a submersion by Lie algebroids $\mathfrak{S}=(A,p)$ on $M$, let $\phi:N \to M$ be a smooth map, and let $\phi^!(C)$ be the pullback connection for $\phi^!\mathfrak{S}$.
\begin{enumerate}[a)]
 \item $\phi^!(C)$ complete if $C$ is complete;
 \item $C$ is complete if $\phi^!(C)$ is complete, and $\phi$ is \'etale and surjective;
 \item If $\mathfrak{S}$ has a complete Ehresmann connection and $\phi:N \times [0,1] \to M$ is a smooth map, and $\phi_0:=\phi|_{N \times \{0\}}$, there is an isomorphism of submersions by Lie algebroids
\[
 (\Psi,\psi,\id):\phi_0^!\mathfrak{S} \times TI \diffto \phi^!\mathfrak{S},
\]which restricts to the identity on $\phi_0^!\mathfrak{S}$;
 \item A sufficient condition ensuring that an Ehresmann connection $C$ for $\mathfrak{S}$ be complete is that there exist:
 \begin{itemize}
  \item an open cover $\mathscr{U}=(U_i)$ of $M$,
  \item a local trivialization $\psi:\coprod U_i \times X_i \diffto q_{\mathscr{U}}^*(E)$, and
  \item exhaustions\footnote{A smooth manifold is \emph{exhausted} by a sequence $(X_n)$ of compact, codimension-zero submanifolds with boundary $X_n \subset X$ if $X = \cup X_n$.} $\mathscr{X}_i=(X_{i,n})$ of $X_i$ by compact submanifolds, 
 \end{itemize}such that $q_{\mathscr{U}}^!C$ is tangent\footnote{An Ehresmann connection $C$ for $\mathfrak{S}$ is \emph{tangent} to a submanifold $\Sigma' \subset \Sigma$ if $H(C)_{x} \subset T_{x}\Sigma'$ for all $x \in \Sigma'$. It is tangent to a compact exhaustion $(\Sigma_n)$ of $\Sigma$ if it is tangent to each $\Sigma_n$.} to each $U_i \times X_{i,n}$.
\end{enumerate}
\end{proposition}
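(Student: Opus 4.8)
The entire argument rests on condition C3 (parallel transport) together with the compatibility of horizontal lifts under pullback recorded just before the statement: the pullback morphism $(\Phi,\varphi,\phi):\phi^!\mathfrak S\to\mathfrak S$ intertwines horizontal lifts, $\Phi(\hor_{\phi^!C}(w))=\hor_C(\phi_*w)$. The key bookkeeping fact is that for a curve $\tilde c:[0,1]\to N$ with $c:=\phi\circ\tilde c$, the canonical identifications $i_{\tilde c(\epsilon)}^!(\phi^!\mathfrak S)=i_{c(\epsilon)}^!\mathfrak S$ (functoriality of pullback, since $\phi\circ i_{\tilde c(\epsilon)}=i_{c(\epsilon)}$) carry $\phi^!C$-parallel transport along $\tilde c$ to $C$-parallel transport along $c$; indeed the two defining time-dependent sections are $\Phi$-related, hence have $\Phi$-intertwined flows. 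Part a) is then immediate: for any $\tilde c$, completeness of $C$ gives $C$-parallel transport along $\phi\tilde c$ up to $\epsilon=1$, which transports through the identifications to $\phi^!C$-parallel transport along $\tilde c$, so $\phi^!C$ satisfies C3. For part b), I would use that an \'etale surjective $\phi$ admits a local section near each point of $M$; covering the compact image of a curve $c$ in $M$ by finitely many such domains, I lift $c$ piecewise to curves $\tilde c_j$ in $N$, apply completeness of $\phi^!C$ on each piece, read it back as $C$-parallel transport on the corresponding piece of $c$, and concatenate (cocycle property) to obtain $C$-parallel transport along all of $c$. Surjectivity is used precisely to guarantee local sections exist along the whole of $c$.

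For part c), let $C$ be a complete connection for $\mathfrak S$; by a) the pullback $\phi^!C$ is a complete connection for $\phi^!\mathfrak S$ over $N\times I$. I would horizontally lift the coordinate field $\partial_\epsilon\in\X(N\times I)$ to $\hor_{\phi^!C}(\partial_\epsilon)\in\Gamma(\varphi^!A)$ and take its Lie algebroid flow $\Theta_\epsilon$, covering the flow $\theta_\epsilon$ of $\partial_\epsilon$. Although $\partial_\epsilon$ is incomplete on the interval $I$, completeness of $\phi^!C$ ensures, via C3 along the vertical curves $\epsilon\mapsto(n,\epsilon)$, that $\Theta_\epsilon$ is defined for all $\epsilon\in[0,1]$ on $\overline p^{-1}(N\times\{0\})$. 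Setting $\psi(z,\epsilon):=\theta_\epsilon(z)$ and letting $\Psi$ be $\Theta_\epsilon$ augmented by the canonical $TI$-factor in the $\partial_\epsilon$-direction yields a map $\phi_0^!\mathfrak S\times TI\to\phi^!\mathfrak S$; it covers $\id_{N\times I}$ since $\theta_\epsilon$ covers $(n,s)\mapsto(n,s+\epsilon)$, it is an isomorphism because algebroid flows are algebroid automorphisms, and it restricts to the identity at $\epsilon=0$.

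Part d) chains the previous parts. As $q_{\mathscr U}:\coprod U_i\to M$ is \'etale and surjective, part b) reduces completeness of $C$ to that of $q_{\mathscr U}^!C$, which, the base being a disjoint union, amounts to completeness of each $C|_{U_i}$; by C2 this is completeness of the horizontal distribution $H(C|_{U_i})$ of the trivial submersion $U_i\times X_i\to U_i$. The hypothesis that $q_{\mathscr U}^!C$ be tangent to each $U_i\times X_{i,n}$ means $H(C|_{U_i})$ restricts to an Ehresmann connection on the bundle $U_i\times X_{i,n}\to U_i$ with compact fibre $X_{i,n}$, which is complete by Ehresmann's theorem; parallel transport of any point of $X_{i,n}$ along any curve in $U_i$ then exists and remains in a compact set. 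Since the $X_{i,n}$ exhaust $X_i$, every point admits parallel transport along every curve, so $H(C|_{U_i})$ is complete.

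I expect the main obstacle to be part c): one must verify that $\Theta_\epsilon$ is genuinely defined up to $\epsilon=1$ --- this is exactly where completeness prevents the horizontal flow from escaping the fibres before the base flow reaches the boundary of $I$ --- and that the fibrewise isomorphisms assemble into a \emph{smooth} morphism of Lie algebroids, which follows from smooth dependence of algebroid flows on initial conditions and parameters. In part d) the analogous subtlety is that confinement of the horizontal flow now comes from tangency to the compact exhaustion rather than from a complete base field, so some care is needed at the boundaries $\partial X_{i,n}$, which the tangency hypothesis renders invariant under parallel transport.
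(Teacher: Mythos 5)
Your proposal follows essentially the same route as the paper's proof: the intertwining of parallel transports under the canonical identifications $j_x^!\phi^!\mathfrak{S} \simeq i_{\phi(x)}^!\mathfrak{S}$ for a) and b) (including the piecewise lifting of curves through the \'etale surjection for b)), the flow of $\hor_{\phi^!C}(\partial_\epsilon)$ assembling into the trivializing isomorphism for c), and the reduction of d) to b) via $q_{\mathscr{U}}$. The only differences are cosmetic: where you reprove tangency-implies-completeness by confinement to the compact pieces $U_i \times X_{i,n}$, the paper simply cites \cite[Lemma 2]{Matias}, and where you appeal to general properties of algebroid flows to see that $\Psi$ is a morphism of Lie algebroids, the paper verifies the mixed bracket relation explicitly via $[\partial_\epsilon,\beta]_{A\times T\R} = \partial_\epsilon\Phi_\epsilon^*(b) = \Phi_\epsilon^*([a,b]_A)$ --- which is exactly the flow equation you invoke implicitly.
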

\begin{proof}
Let $j_x$ and $i_y$ denote the inclusions $x \in N$ and $y \in M$. Then there is a canonical identification $ \psi_x : j_x^!\phi^!\mathfrak{S} \diffto i_{\phi(x)}^!\mathfrak{S}$; hence if $c : [0,1] \to N$ is a smooth curve, we have that the local flows $\widetilde{\Phi}_{\epsilon}$ of $\hor_{\phi^!C}(\dot{c})$ and $\Phi_{\epsilon}$ of $\hor_{C}(\phi_*\dot{c})$ intertwine these maps:
\[
 \xymatrix{
 (\phi j_{c(0)})^!\mathfrak{S} \ar[d]_{\psi_{c(0)}} \ar[r]^{\widetilde{\Phi}_{\epsilon}} & (\phi j_{c(\epsilon)})^!\mathfrak{S} \ar[d]^{\psi_{c(\epsilon)}}\\
 i_{\phi c(0)}^!\mathfrak{S} \ar[r]_{\Phi_{\epsilon}} & i_{\phi c(\epsilon)}^!\mathfrak{S}
 }
\]This implies that $\widetilde{\Phi}$ exists for all times if $C$ is complete; conversely, if $\phi^!C$ is complete, then $\Phi_{\epsilon}$ exists along curves in $M$ which can be lifted locally to curves in $N$. This proves a) and b).

% 
% 
% Let $c:[0,1] \to N$ be a smooth curve. Then there is a canonical identification $\tau_x:j_x^!\overline{\phi}^!(A) \diffto i_{\phi(x)}^!(A)$, where $j_x$ denotes the inclusion of $x \in N$. If $C$ is complete, then the flow $\Phi_{\epsilon}:i_{\phi(c(0))}^!(A) \diffto i_{\phi(c(\epsilon))}^!(A)$ of $\hor_C(\phi_*\dot{c})$ is defined, and $\tau_{c(\epsilon)}^{-1}\Phi_{\epsilon}\tau_{c(0)}:j_{c(0)}^!\overline{\phi}^!(A) \diffto j_{c(\epsilon)}^!\overline{\phi}^!(A)$ defines a flow for $\hor_{\phi^!C}(\dot{c})$. This proves a).
% 
% Let $c:[0,1] \to M$ be a smooth curve. If $\phi$ is \'etale and surjective, then there is a partition $0=t_0<t_1 <\cdots <t_N=1$, such that $c|_{[t_{i-1},t_{i}]}$ lifts to a smooth curve $\widetilde{c}_i:[t_{i-1},t_{i}] \to N$. This induces flows $\varPhi^i_{\epsilon}:j_{c(t_{i-1})}^!\overline{\phi}^!(A) \diffto j_{c(t_i)}^!\overline{\phi}^!(A)$, and $\tau_{\widetilde{c}_i(\epsilon)}^{-1}\varPhi^i_{\epsilon}\tau_{\widetilde{c}_i(t_{i-1})}^{-1}:i_{c(0)}^!(A) \diffto i_{c(\epsilon)}^!(A)$ does not depend on the choice of lift $\widetilde{c}_i$. Hence it defines a flow $\Phi_{\epsilon}:i_{c(0)}^!(A) \diffto i_{c(\epsilon)}^!(A)$. This proves b).

For c), we claim that the flow $\Phi_{\epsilon}:\phi^!\mathfrak{S} \to \phi^!\mathfrak{S}$ of $a = \hor_{\phi^!(C)}(\bd_{\epsilon})$ assembles into a morphism of Lie algebroids
\[
 (\Phi,\phi) : \phi^!\mathfrak{S} \times T\R \rmap \phi^!\mathfrak{S}
\](extending $\phi$ to a smooth map $\phi:M \times \R \to M$) by declaring $\bd_{\epsilon} \sim a$. Indeed, it suffices to check that
\[\beta \sim b \quad \Longrightarrow \quad [\bd_{\epsilon},\beta]_{A \times T\R} \sim [a,b]_A,\]and for that, it suffices to assume that $\beta=\Phi_{\epsilon}^*(b)$, in which case it follows from
\[
 [\bd_{\epsilon},\beta]_{A \times T\R} = \bd_{\epsilon}\beta = \bd_{\epsilon}\Phi_{\epsilon}^*(b) = \Phi_{\epsilon}^*([a,b]_A).
\]The restriction of $\Phi$ to $\phi_0^!\mathfrak{S} \times TI$ then defines an isomorphism
\[
 (\Phi,\phi) : \phi_0^!\mathfrak{S} \times TI \diffto \phi^!\mathfrak{S}.
\]For d), note that the tangency condition on $q_{\mathscr{U}}^!C$ implies that it is complete (see \cite[Lemma 2]{Matias}), and since $q_{\mathscr{U}}$ is \'etale and surjective, item b) applies. 
\end{proof}

As pointed out in \cite{Matias}, the condition in item d) of Proposition \ref{pro : pullback of complete} is suitable for globalization of local complete connections:

\begin{proposition}\label{pro : subexhaustions}
A complete Ehresmann connection for a submersion by Lie algebroids $\mathfrak{S}$ exists, provided that there exist: 
\begin{enumerate}[1)]
 \item an open cover $\mathscr{U}=(U_i)$ of $M$ and a local trivialization $\psi: \coprod U_i \times X_i \diffto q_{\mathscr{U}}^*(\Sigma)$,
 \item exhaustions $\mathscr{X}_i=(X_{i,n})$ of $X_i$ by compact submanifolds, and
 \item an Ehresmann connection $C$ for $q_{\mathscr{U}}^!\mathfrak{S}$,
\end{enumerate}with the property that the pullback connection of $C$ by $\psi$ is tangent to each $U_i \times X_{i,n}$. 
\end{proposition}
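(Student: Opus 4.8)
The plan is to glue the local data into a single global Ehresmann connection by a partition of unity, and then to reduce the completeness of the glued connection to a statement over one chart at a time, where only finitely many of the local connections interact.

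First I would use paracompactness of $M$ to arrange that the cover $\mathscr{U}=(U_i)$ is locally finite and countable, and fix a partition of unity $(\rho_i)$ subordinate to it; restricting $\psi$ and the exhaustions $\mathscr{X}_i$ to this refinement costs nothing. Writing $C_i$ for the restriction of $C$ to the $i$-th component $\mathfrak{S}|_{U_i}=q_{\mathscr{U}}^!\mathfrak{S}|_{U_i}$ (legitimate, since $q_{\mathscr{U}}|_{U_i}$ is the open inclusion), each $C_i$ is an Ehresmann connection for $\mathfrak{S}|_{U_i}$ with horizontal lift $\hor_{C_i}:p^*(TM)|_{\Sigma|_{U_i}}\to A|_{\Sigma|_{U_i}}$, and I set
\[
\hor_{C'}:=\sum_i (p^*\rho_i)\,\hor_{C_i}.
\]
Each summand extends by zero off $\Sigma|_{U_i}$, the sum is locally finite, and $\tau_A\circ\hor_{C'}=\sum_i (p^*\rho_i)\,\id=\id$ because $\sum_i\rho_i=1$; hence $C':=\operatorname{im}\hor_{C'}$ is a genuine Ehresmann connection for $\mathfrak{S}$, and it remains to prove that $C'$ is complete.

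The key reduction is that completeness is local in the base. Given a curve $c:[0,1]\to M$, compactness lets me subdivide $[0,1]$ so that $c$ maps each subinterval into a single $U_i$; since the parallel transports attached to a connection along consecutive subcurves compose (being flows of time-dependent sections, as in condition $\mathrm{C}3$), it suffices to produce parallel transport along curves lying in a single chart. Fixing such a chart and shrinking to a neighbourhood with compact closure, the connections that contribute are the finitely many $C_i$ with $\rho_i\neq 0$ there; each of these is complete by \cite[Lemma 2]{Matias} (exactly as in the proof of item d) of Proposition \ref{pro : pullback of complete}), being tangent to the fibre-compact exhaustion $U_i\times X_{i,n}$. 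Over this chart $C'$ is thus a convex combination of finitely many complete connections.

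The main obstacle is precisely that a convex combination of complete connections need not be complete: the $C_i$ are tangent to different exhaustions, and one checks that in general there is no common invariant fibre-compact exhaustion, so the soft tangency criterion cannot be applied directly to $C'$. The exhaustions must therefore be used quantitatively. The mechanism I would exploit is the confinement furnished by tangency: each $\hor_{C_i}$ preserves the fibre-level sets of its own exhaustion, and over the compact image of the base curve the other connections $\hor_{C_j}$ move a horizontal lift only within their own fibre-compact (hence relatively compact) level sets, on which the level function of the chosen trivialization is bounded. A comparison estimate of this type — carried out as in \cite{Matias} — bounds the growth of the fibre coordinate of a horizontal lift of $c$ over a compact time interval, trapping the lift in a compact set and thereby furnishing parallel transport over each subinterval. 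Concatenating over the subdivision yields parallel transport along all of $c$, so $C'$ is complete.
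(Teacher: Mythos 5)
There is a genuine gap at the point you yourself flag as ``the main obstacle'': the completeness of the glued connection. Your partition-of-unity average $\hor_{C'}=\sum_i(p^*\rho_i)\hor_{C_i}$ is a perfectly good Ehresmann connection, and the reduction of completeness to curves lying in a single chart is fine, but the trapping mechanism you invoke does not exist. Over an overlap, $C'$ is a convex combination of connections each tangent to the level hypersurfaces of a \emph{different} exhaustion; differentiating the $i$-th level function along a $C'$-horizontal lift annihilates only the $C_i$-summand, leaving the contribution of $\sum_{j\neq i}\rho_j\,\hor_{C_j}$, which is unbounded on the non-compact fibres. Nothing confines the lift: your claim that the other connections $\hor_{C_j}$ move a horizontal lift only within their own fibre-compact level sets conflates the flow of $C_j$ (which does preserve its own exhaustion) with the flow of $C'$ (which in general preserves none of them). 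This is the same phenomenon as the failure of completeness for sums of complete vector fields --- $x^2\bd_y$ and $y^2\bd_x$ on $\R^2$ are complete but their sum is not --- so no ``comparison estimate as in \cite{Matias}'' can close this step: \cite[Lemma 2]{Matias} requires an honest invariant exhaustion, which the averaging destroys.

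The paper's proof is engineered precisely to avoid averaging. It shrinks to compacts $K_i\subset U_i$ and, by an inductive interleaving argument (two charts first, then finitely many charts via a lexicographic sequence of pairs, then a countable locally finite cover), extracts subexhaustions $\mathscr{Y}_i=(X_{i,\alpha_i(n)})$ satisfying the disjointness condition ($\ast$): boundary walls $\psi_i(K_i\cap K_j\times\bd X_{i,\alpha_i(n)})$ coming from distinct charts never meet. Consequently $\psi$ embeds the disjoint union of (neighbourhoods of) these walls into $\Sigma$; one pushes $C$ forward there \emph{without any averaging}, and extends it to a global connection $C'$ using only the fact that Ehresmann connections are sections of an affine bundle. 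By construction $C'$ is then tangent, chart by chart, to each $U_i'\times X_{i,\alpha_i(n)}$, so item d) of Proposition \ref{pro : pullback of complete} applies verbatim and yields completeness. If you want to salvage your approach, the fix is not an estimate but exactly this combinatorial step: make the walls disjoint first, so that near each wall the glued connection coincides with a single $C_i$ and the tangency criterion applies.
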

\begin{proof}
Let $\mathscr{U}$, $\psi$, $\mathscr{X}_i$ and $C$ be as in the statement. Let us write $\psi=\coprod \psi_i$, and note that we can assume without loss of generality that $\mathscr{U}=(U_i)$ is locally finite, and has countably many elements. Choose a cover $\mathscr{K}=(K_i)$ of $M$ by compact subsets satisfying $K_i \subset U_i$. Assume for the moment that
\begin{equation}\tag{$\ast$}
 \psi_i(K_i \cap K_j \times \bd X_{i,n}) \cap \psi_j(K_i \cap K_j \times \bd X_{j,m}) \neq \varnothing \quad \Longrightarrow \quad i=j.
\end{equation}This condition ensures that the restriction
\[
 \psi|_{\Upsilon}:\Upsilon \rmap \Sigma, \quad \Upsilon:=\coprod_{i,n}K_i \times \bd X_{i,n}
\]is an embedding, and hence extends to an open embedding $\psi|_{W}:W \to \Sigma$, where $W=\coprod_{i,n}U'_i \times \bd X_{i,n}$ and $U_i'$ is an open neighborhood of $K_i$ in $U_i$. Then $\psi$ pushes $C|_{W}$ to an Ehresmann connection for $\mathfrak{S}|_{\psi(W)}$, and the latter can be extended to an Ehresmann connection $C'$ for $\mathfrak{S}$, simply because Ehresmann connections are sections of an affine bundle. By construction, a such connection $C'$ is automatically complete by item d) of Proposition \ref{pro : pullback of complete}.

It therefore suffices to show that one can choose subexhaustions $\mathscr{Y}_i=(X_{i,\alpha_i(n)})$ of $\mathscr{X}_i$ for which ($\ast$) holds true. Denote the transition maps $\psi_j|_{U_i\cap U_j \times X}^{-1} \circ \psi_i|_{U_i\cap U_j \times X}$ by $\psi_{ji}$. We split the proof into three cases.
\vspace{0.1cm}

\noindent\emph{Case One: $\mathscr{U}=\{U_1,U_2\}$.} In the case where the open cover $\mathscr{U}$ consists of two open sets $U_1$ and $U_2$, we construct subexhaustions $\mathscr{Y}_1=(X_{1,\alpha_1(n)})$ and $\mathscr{Y}_2=(X_{2,\alpha_2(n)})$ by induction as follows: the initial step sets $\alpha_1(1):=1$ and
 \[\alpha_2(1) = \min \{ k \ | \ \psi_{21}(K_1 \cap K_2 \times X_{1,1}) \subset K_1 \cap K_2 \times \mathrm{int}(X_{2,k}) \}.\]Suppose the values of $\alpha_1(n),\alpha_2(n)$ have been chosen for all $n \leqslant r$ in such a way that
 \begin{align*}
  \psi_{21}(K_1 \cap K_2 \times X_{1,\alpha_1(n)}) \subset K_1 \cap K_2 \times \mathrm{int}(X_{2,\alpha_2(n)}) \\
  \psi_{12}(K_1 \cap K_2 \times X_{2,\alpha_2(n-1)}) \subset K_1 \cap K_2 \times \mathrm{int}(X_{1,\alpha_1(n)})
 \end{align*}Then these relations are satisfied up to $r+1$ if we set
\begin{align*}
 \alpha_1(r+1)&:= \min\{k \ | \ \psi_{12}(K_1 \cap K_2 \times X_{2,\alpha_2(r)}) \subset K_1 \cap K_2 \times \mathrm{int}(X_{1,k})\}, \\
 \alpha_2(r+1)&:=\min\{k \ | \ \psi_{21}(K_1 \cap K_2 \times X_{1,\alpha_1(r+1)}) \subset K_1 \cap K_2 \times \mathrm{int}(X_{2,k})\}.
\end{align*}This algorithm constructs increasing maps $\alpha_i:\N \to \N$ for which the subexhaustions $\mathscr{Y}_i:=(X_{i,\alpha_i(n)})$ satisfy the condition in the statement.
\vspace{0.1cm}

\noindent\emph{Case Two: $\mathscr{U}=\{U_1,U_2,...,U_{N}\}$.} In this case, the desired subexhaustions $\mathscr{Y}_i$ are constructed by ${N \choose 2}$ successive applications of Case One, to the following lexicographic sequence of pairs of open sets:
\begin{align*}
 \{U_1,U_2\}, \  \hdots, \ \{U_1 ,U_{N}\}, \ \{U_2 ,U_3\}, \ \hdots, \ \{U_2 ,U_{N}\}, \ \hdots, \ \{U_{N-1},U_{N}\}.
\end{align*}

\vspace{0.1cm}

\noindent\emph{Case Three: Countable $\mathscr{U}$.} Because $\mathscr{U}=(U_i)_{i \in \N}$ is locally finite, the set
\[
 \varLambda(i):=\{j \in \Lambda(i) \ | \ U_i \cap U_j \neq \varnothing, \ j \geqslant i\}
\]is finite for each $i$. Write $M_i:=\bigcup_{j \in \varLambda(i)}U_j$. We define subexhaustions $\mathscr{Y}_i:=(X_{i,\alpha_i(n)})$ by inductively applying Case Two to $\left(M_{r},(U_i)_{i \in \varLambda(r)}\right)$. This process generates the desired compact subexhaustions $\mathscr{Y}_i$.\qedhere
\end{proof}
 
\begin{definition}
 A submersion by Lie algebroids $\mathfrak{S}=(A,p)$ is called {\bf locally trivial} if every point $x \in M$ has an open neighborhood $U \subset M$, over which there is an isomorphism $(\Psi,\psi,\id):(TU,\id) \times i_x^!\mathfrak{S} \diffto i_U^!\mathfrak{S}$.
\end{definition}

\begin{theorem}\label{thm : LAB iff}
The following are equivalent conditions on a submersion by Lie algebroids $\mathfrak{S}$:
\begin{enumerate}[i)]
\item It admits a complete Ehresmann connection;
 \item It is locally trivial.
\end{enumerate}
\end{theorem}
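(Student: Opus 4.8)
The plan is to derive both implications directly from the two completeness results already in hand: Proposition \ref{pro : pullback of complete}(c) packages the parallel-transport/flow construction (which trivializes pullbacks along maps out of $N\times I$), while Proposition \ref{pro : subexhaustions} packages the partition-of-unity globalization of locally tangent connections. So I expect the theorem to amount to feeding the right data into each of these, rather than to any new flow-existence argument.

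For the implication (i) $\Rightarrow$ (ii), I would fix $x \in M$ and a contractible chart $U \ni x$ (a coordinate ball), together with the straight-line contraction $H : U \times I \to U$ with $H_0 \equiv x$ and $H_1 = \id_U$; composing with the inclusion I view $H$ as a smooth map $\phi : U \times I \to M$ whose restriction $\phi_0$ is constant at $x$ and whose restriction $\phi_1 = i_U$ is the inclusion of $U$. Since $\mathfrak{S}$ admits a complete Ehresmann connection by hypothesis, Proposition \ref{pro : pullback of complete}(c) produces an isomorphism $\phi_0^!\mathfrak{S} \times TI \diffto \phi^!\mathfrak{S}$ of submersions by Lie algebroids covering $\id_{U \times I}$. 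I would then restrict this isomorphism along $U \times \{1\} \inc U \times I$: on the right $\phi_1 = i_U$ yields $i_U^!\mathfrak{S}$, while the fibre $i_1^!(TI,\id)$ of $(TI,\id)$ over $1$ is the zero algebroid over a point, so the left side collapses to $\phi_0^!\mathfrak{S}$. Finally, factoring the constant map as $U \to \{x\} \inc M$ and using that pullback is functorial and compatible with external products identifies $\phi_0^!\mathfrak{S}$ with $(TU,\id)\times i_x^!\mathfrak{S}$. This is exactly the isomorphism $(TU,\id)\times i_x^!\mathfrak{S} \diffto i_U^!\mathfrak{S}$ over $\id_U$ required by local triviality.

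For the converse (ii) $\Rightarrow$ (i), I would assemble the trivializing data and invoke Proposition \ref{pro : subexhaustions}. Local triviality supplies an open cover $\mathscr{U} = (U_i)$ of $M$ with Lie algebroid isomorphisms $(TU_i,\id)\times i_{x_i}^!\mathfrak{S} \diffto i_{U_i}^!\mathfrak{S}$; on total spaces these are diffeomorphisms $U_i \times X_i \diffto \Sigma|_{U_i}$ over $U_i$, where $X_i := \Sigma_{x_i}$, and together they form a local trivialization $\psi : \coprod U_i \times X_i \diffto q_{\mathscr{U}}^*(\Sigma)$. Each fibre $X_i$ carries a compact exhaustion $\mathscr{X}_i = (X_{i,n})$. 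For the connection I would take $C$ to be the product connection, which under the isomorphism is the horizontal lift $v \mapsto (v,0) \in TU_i \times A_{x_i}$ with kernel $0 \times A_{x_i}$; its horizontal distribution at $(u,\sigma)$ is $T_uU_i \times \{0\} \subset T_uU_i \times T_\sigma X_{i,n}$, hence tangent to each $U_i \times X_{i,n}$. With these three ingredients Proposition \ref{pro : subexhaustions} delivers a complete Ehresmann connection for $\mathfrak{S}$.

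The only real friction I anticipate is the bookkeeping in the two structural identifications. In the first implication I must verify carefully that restricting $\phi_0^!\mathfrak{S} \times TI$ to time one recovers $\phi_0^!\mathfrak{S}$, and that $\phi_0^!\mathfrak{S} \cong (TU,\id)\times i_x^!\mathfrak{S}$; both are instances of the compatibility of the pullback construction with external products and with composition of base maps. In the converse I must confirm that the transported product connection genuinely has the claimed horizontal distribution and tangency. All the substantive analysis — that tangency to a compact exhaustion forces completeness, and that completeness propagates through parallel transport — is already isolated in Propositions \ref{pro : pullback of complete} and \ref{pro : subexhaustions}, so this theorem should reduce to the two reductions above.
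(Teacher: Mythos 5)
Your proposal is correct and follows essentially the same route as the paper: for i) $\Rightarrow$ ii) the paper likewise feeds a local contraction (assembled over a cover as $f:\coprod U_i \times [0,1] \to M$) into Proposition \ref{pro : pullback of complete}(c) and reads off the trivialization $\coprod TU_i \times i_{x_i}^!\mathfrak{S} \diffto \coprod i_{U_i}^!\mathfrak{S}$ from the two ends of the cylinder, and for ii) $\Rightarrow$ i) it applies Proposition \ref{pro : subexhaustions} to the product connection on $\coprod TU_i \times X_i$, exactly as you do. Your extra bookkeeping (restriction at time one, compatibility of pullback with external products, tangency of the product connection to $U_i \times X_{i,n}$) is left implicit in the paper but matches its argument.
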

\noindent Hence locally trivial submersions by Lie algebroids in our sense are exactly what \cite[Definition 1.1]{BraZhu} calls \emph{fibrations} over $TM$.
\begin{proof}
Let $\mathfrak{S}=(A,p)$ be a submersion by Lie algebroids on $M$.

\noindent {\bf i) $\Rightarrow$ ii)} If $\mathfrak{S}$ admits a complete Ehresmann connection, choose an open cover $\mathscr{U}=(U_i)$ of $M$, for which there exists smooth contractions:
\[
 f_i:U_i \times [0,1] \rmap U_i, \quad f_{i,0}(x)=x_i, \quad f_{i,1}(x)=x, \quad x \in U_i,
\]and consider the ensuing homotopy $f:\coprod U_i \times [0,1] \to M$. By item c) of Proposition \ref{pro : pullback of complete}, we have an isomorphism $f_0^!\mathfrak{S} \times TI \diffto f^!\mathfrak{S}$, and in particular, an isomorphism between
\[
f_0^!\mathfrak{S} = \coprod TU_i \times i_{x_i}^!\mathfrak{S}, \quad \text{and} \quad f_1^!\mathfrak{S} = q_{\mathscr{U}}^!(\mathfrak{S})=\coprod i_{U_i}^!\mathfrak{S}.
\]

\noindent {\bf ii) $\Rightarrow$ i)} Choose an open cover $\mathscr{U}=(U_i)$ of $M$, over which there exists a Lie algebroid isomorphism
\begin{align*}
   (\Psi,\psi,\id):\coprod (TU_i,\id) \times i_{x_i}^!\mathfrak{S} \diffto q_{\mathscr{U}}^!\mathfrak{S},
  \end{align*}where $x_i \in U_i$. Note that the product Ehresmann connection $\coprod TU_i \times X_i$ for $q_{\mathscr{U}}^!\mathfrak{S}$ satisfies the hypotheses of Proposition \ref{pro : subexhaustions} with respect to any exhaustions $\mathscr{X}_i=(X_{i,n})$ of $X$ by compact submanifolds, and therefore a complete connection $C$ exists for $\mathfrak{S}$.
\end{proof}

\section{The system of local coefficients}\label{sec : The system of local coefficients}

Given a submersion by Lie algebroids $\mathfrak{S}=(A,p)$ and a representation $\nabla:A \acts D$, there is an induced presheaf of graded vector spaces
\[
 \mathscr{H}_{\nabla}:\mathrm{Open}(M) \rmap \mathrm{GVect}, \quad \mathscr{H}_{\nabla}(U):=\HH(A|_{\Sigma|_U};D|_{\Sigma|_U}),
\]whose associated sheaf we denote by $\pr:\mathscr{H}_{\nabla}^+\to M$.

\begin{lemma}\label{lem : cohomology presheaf locally constant for lab}
 The presheaf $\mathscr{H}_{\nabla}$ is locally constant if $\mathfrak{S}$ is a locally trivial submersion by Lie algebroids.
\end{lemma}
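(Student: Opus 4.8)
The plan is to exhibit a basis of $M$ on which the restriction maps of $\mathscr{H}_\nabla$ are isomorphisms. Concretely, I would fix a basis $\mathscr{B}$ consisting of contractible opens $U$ over which $\mathfrak{S}$ trivializes, i.e. for which there is an isomorphism $(TU,\id)\times i_x^!\mathfrak{S}\diffto i_U^!\mathfrak{S}$, and show that for every nested pair $V\subset U$ in $\mathscr{B}$ the restriction $\mathscr{H}_\nabla(U)\to\mathscr{H}_\nabla(V)$ is an isomorphism; this is exactly local constancy. Since $i_U^!\mathfrak{S}=(A|_{\Sigma|_U},p)$ for the open inclusion $i_U$, the trivialization identifies $A|_{\Sigma|_U}$ with the product $TU\times A_x$, where $A_x=i_x^!\mathfrak{S}$ and $\Sigma_x=p^{-1}(x)$, and its restriction to $V$ identifies $A|_{\Sigma|_V}$ with $TV\times A_x$ (using the same fibre over $x$, even if $x\notin V$).

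The first — and main — difficulty is that local triviality is a statement about $\mathfrak{S}=(A,p)$ alone, whereas $\mathscr{H}_\nabla$ involves the coefficients $D$, so I must upgrade the trivialization to an isomorphism of representations. To this end I would pass to $\mathscr{A}_\nabla=A\oplus D$, which is again a submersion by Lie algebroids $\mathfrak{S}_\nabla=(\mathscr{A}_\nabla,p)$ with $K(\mathscr{A}_\nabla)=K(A)\oplus D$. By Theorem \ref{thm : LAB iff} the hypothesis furnishes a complete Ehresmann connection $C$ for $\mathfrak{S}$; viewing $C\subset A\subset\mathscr{A}_\nabla$ gives an Ehresmann connection for $\mathfrak{S}_\nabla$, and, exactly as in the totally intransitive argument of Corollary \ref{cor : nf for reps}, the flow of $(\mathrm{hor}_C(u),0)$ is defined wherever the flow of $\mathrm{hor}_C(u)$ is, so $C$ is complete for $\mathfrak{S}_\nabla$ as well. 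Hence $\mathfrak{S}_\nabla$ is locally trivial, and — because the flow defining the trivialization of Proposition \ref{pro : pullback of complete}(c), applied to $\mathscr{A}_\nabla$, is that of $(\mathrm{hor}_C(\partial_{\epsilon}),0)$, which is block-diagonal in the $A\oplus D$ splitting (Corollary \ref{cor : nf for reps}) — the resulting isomorphism respects the decomposition. On each $U\in\mathscr{B}$ this yields an isomorphism of representations $\nabla|_{\Sigma|_U}\cong\mathrm{pr}^*(\nabla_x)$, with $\nabla_x=i_x^!\nabla$ and $\mathrm{pr}:U\times\Sigma_x\to\Sigma_x$, covering $A|_{\Sigma|_U}\cong TU\times A_x=\mathrm{pr}^!A_x$.

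With the coefficients trivialized the computation is immediate. Since $\mathrm{pr}:U\times\Sigma_x\to\Sigma_x$ has contractible fibres $U$, the homotopy invariance of Lie algebroid cohomology \cite[Theorem 2]{Cr_VEst} shows that $\mathrm{pr}^*:\HH(A_x;D_x)\to\HH(\mathrm{pr}^!A_x;\mathrm{pr}^*D_x)$ is an isomorphism, whence $\mathscr{H}_\nabla(U)\cong\HH(A_x;D_x)$. For the local constancy, given $V\subset U$ in $\mathscr{B}$ I would restrict the trivialization over $U$ to $V$ and compare the two pullback isomorphisms: writing $j:V\times\Sigma_x\hookrightarrow U\times\Sigma_x$ for the inclusion one has $\mathrm{pr}_U\circ j=\mathrm{pr}_V$, so $j^*\circ\mathrm{pr}_U^*=\mathrm{pr}_V^*$ and hence $j^*=\mathrm{pr}_V^*\circ(\mathrm{pr}_U^*)^{-1}$ is an isomorphism. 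Naturality of restriction then places $\mathscr{H}_\nabla(U)\to\mathscr{H}_\nabla(V)$ into a commuting square with $j^*$ and the two iso pullbacks, forcing it to be an isomorphism. As $\mathscr{B}$ is a basis, this proves $\mathscr{H}_\nabla$ locally constant, the only genuinely delicate point being the block-diagonal upgrade to representations carried out in the first step.
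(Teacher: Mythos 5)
Your proposal is correct and follows essentially the same route as the paper: pass to $\mathscr{A}_\nabla=A\oplus D$, transfer the complete Ehresmann connection (via Theorem \ref{thm : LAB iff}) to conclude that $\mathfrak{S}_\nabla$ is again locally trivial with a trivialization respecting the splitting, and then apply \cite[Theorem 2]{Cr_VEst} to the projection $\mathrm{pr}:U\times\Sigma_x\to\Sigma_x$ to identify $\mathscr{H}_\nabla(U)$ with $\HH(A_x;D_x)$. You merely supply details the paper leaves implicit --- the completeness of the embedded connection $C\subset\mathscr{A}_\nabla$ via the totally intransitive flow argument, the block-diagonality of the trivialization, and the bookkeeping for nested basis opens --- and your reading $C\subset\mathscr{A}_\nabla$ is the correct interpretation of the paper's ``$C_\nabla=C\oplus D$'', which taken literally is not a linear complement to $K(A)\oplus D$.
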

\begin{proof}
First note that if $\mathfrak{S}=(A,p)$ is a locally trivial submersion by Lie algebroids, and $\nabla:A \acts D$ is a representation, then $\mathfrak{S}_{\nabla}=(\mathscr{A}_{\nabla},p)$ is again a locally trivial submersion by Lie algebroids. Indeed, if $C$ is a complete Ehresmann connection for $\mathfrak{S}$, then $C_{\nabla}=C\oplus D$ is a complete Ehresmann connection for $\mathfrak{S}_{\nabla}$; hence Theorem \ref{thm : LAB iff} implies that $\mathfrak{S}_{\nabla}$ is locally trivial. So we are reduced to showing that, for a contractible open neihborhood $U$ of $x$, and $\pr:U \times i_x^*(\Sigma) \to i_x^*(\Sigma)$ the canonical projection, $\mathscr{H}_{\pr^!i_x^!\nabla}$ is locally constant, and the latter is a consequence of \cite[Theorem 2]{Cr_VEst}, which implies that $\pr^*:\mathrm{H}(i_x^!\nabla) \to \mathrm{H}(\pr^!i_x^!\nabla)$ is an isomorphism, whence $\mathscr{H}_{\pr^!i_x^!\nabla} \simeq U \times \mathrm{H}(i_x^!\nabla)$, where $\mathrm{H}(i_x^!\nabla)$ has the discrete topology.
\end{proof}

In particular, for a locally trivial submersion by Lie algebroids $\mathfrak{S}=(A,p)$, $\pr:\mathscr{H}_{\nabla}^+\to M$ is a covering space, the fibre over any $x \in M$ being canonically identified with $\HH^q(A_x;D_x)$. We describe next the monodromy of $\pr:\mathscr{H}_{\nabla}^+\to M$, i.e., the homomorphism of abelian groups
\[
 \mathrm{mon}(c):\HH^q(i_{c(0)}^!\mathfrak{S}) \rmap \HH^q(i_{c(1)}^!\mathfrak{S})
\]induced by a curve $c:[0,1] \to M$. By item c) in Proposition \ref{pro : pullback of complete}, $i_0^*:\HH^q(c^!\mathfrak{S}) \diffto \HH^q(i_{c(0)}^!\mathfrak{S})$ is an isomorphism, and similarly for $i_1^*:\HH^q(c^!\mathfrak{S}) \diffto \HH^q(i_{c(1)}^!\mathfrak{S})$. Hence we set
\[
 \mathrm{mon}(c):=(i_{c(1)})^* \circ (i_{c(0)})^{*-1}.
\]

\begin{theorem}[Monodromy]\label{thm : Monodromy}
Let $\mathfrak{S}=(A,p)$ be a submersion by Lie algebroids, endowed with a complete Ehresmann connection $C$ and a representation $\nabla:A \acts D$. For a curve $c:[0,1] \to M$, denote by $\Phi_{\epsilon}$ parallel transport of $\bd_{\epsilon}$ with respect to $c^!(C)$. Then
\[
 \mathrm{mon}(c)= \Phi_1^{\ast-1}.
\]
\end{theorem}
\begin{proof}
Let $a = \hor_{c^!(C)}(\bd_{\epsilon}) \in \Gamma(c^!\mathfrak{S})$, and recall that its flow induces an isomorphism
\[
 (\Phi,\phi):c_0^!\mathfrak{S} \times TI \diffto c^!\mathfrak{S}.
\]Because $a$ spans $c^!(C)$ and $\bd_{\epsilon} \sim a$, the induced Ehresmann connection $(\Phi,\phi)^!C$ for $c_0^!\mathfrak{S} \times TI$ is the flat one.
\end{proof}

\begin{remark}
For transitive Lie algebroids, the system of local coefficients $\mathscr{H}_{\nabla}$ had first appeared in \cite{ChenLiu_Loc}, and was described in great detail in \cite{BruzzoMencattini}. 
\end{remark}

\begin{corollary}[Gau\ss-Manin connection]
 If $\HH(A_x;D_x)$ is finite-dimensional for all $x \in M$, there exist a graded vector bundle $\mathbb{H} \to M$, and a degree-preserving flat connection $\nabla^{\mathrm{GM}}:TM \acts \mathbb{H}$, with the property that
 \begin{enumerate}[a)]
  \item $\mathbb{H}_x=\HH(A_x;D_x)$ for all $x \in M$;
  \item local sections of $\pr:\mathscr{H}_{\nabla}^+ \to M$ are exactly the local horizontal sections of $\nabla^{\mathrm{GM}}$.
 \end{enumerate}
\end{corollary}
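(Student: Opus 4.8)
The point is that, under the finite-dimensionality hypothesis, $\mathscr{H}_{\nabla}$ is precisely a local system of finite-dimensional graded vector spaces, and the statement is nothing but the graded Riemann--Hilbert correspondence between such local systems and flat graded vector bundles. The plan is to extract an explicit locally constant gluing cocycle from the local trivializations produced in Lemma \ref{lem : cohomology presheaf locally constant for lab}, reinterpret it as smooth gluing data for a vector bundle, and declare the corresponding constant local frames to be horizontal.

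First I would fix the charts. By Lemma \ref{lem : cohomology presheaf locally constant for lab} the presheaf $\mathscr{H}_{\nabla}$ is locally constant, so I may choose an open cover $\mathscr{U}=(U_i)$ of $M$ by connected sets over which it is constant; concretely, each $U_i$ can be taken contractible so that, as in the proof of that lemma, $\pr^*$ identifies $\mathscr{H}_{\nabla}(U_i)$ with the fixed finite-dimensional graded vector space $V_i:=\HH(A_{x_i};D_{x_i})$ at a basepoint $x_i\in U_i$. These isomorphisms $\mathscr{H}_{\nabla}^+|_{U_i}\cong U_i\times V_i$ (with $V_i$ discrete) are exactly the sheets of the covering space $\pr:\mathscr{H}_{\nabla}^+\to M$. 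Comparing the two trivializations on each connected overlap $U_i\cap U_j$ produces a degree-preserving linear isomorphism $g_{ij}:V_i\to V_j$ satisfying the cocycle identity on triple overlaps, and the crucial feature is that each $g_{ij}$ is \emph{locally constant}.

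Next I would build the bundle and the connection. I reinterpret the same cocycle $(g_{ij})$ as gluing data for a smooth graded vector bundle, letting $\mathbb{H}$ be the quotient of $\coprod_i U_i\times V_i$ under $(x,v)\sim(x,g_{ji}(v))$, where now each $V_i$ carries its Euclidean topology. Since the $g_{ij}$ are locally constant they are in particular smooth, so $\mathbb{H}\to M$ is a smooth graded vector bundle with $\mathbb{H}_x=\HH(A_x;D_x)$, giving a). I then define $\nabla^{\mathrm{GM}}$ by declaring the constant frames $x\mapsto(x,v)$ over each $U_i$ to be horizontal; equivalently, $\nabla^{\mathrm{GM}}$ is the unique connection whose $\dd$-operator kills these sections. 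This is well defined and flat precisely because the $g_{ij}$ are locally constant, so constant frames in different charts differ by constant linear isomorphisms and thus have compatible covariant derivatives; degree preservation follows because each $g_{ij}$ respects the grading. By construction a local section of $\mathbb{H}$ is $\nabla^{\mathrm{GM}}$-horizontal if and only if it is constant in the charts $U_i\times V_i$, which is exactly the condition of being a local section of $\pr:\mathscr{H}_{\nabla}^+\to M$, establishing b). Finally, Theorem \ref{thm : Monodromy} identifies the parallel transport of $\nabla^{\mathrm{GM}}$ along a curve with the monodromy $\mathrm{mon}(c)$, so $\nabla^{\mathrm{GM}}$ is the Gau\ss--Manin connection in the expected sense.

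The one genuine point to verify, and the step I would treat most carefully, is that the transition maps $g_{ij}$ are locally constant rather than merely continuously varying. This is exactly the content of local constancy of $\mathscr{H}_{\nabla}$: the covering-space structure of $\mathscr{H}_{\nabla}^+$ encodes precisely that its local sections, and hence the chart comparisons between them, are locally constant. Everything else is the standard passage from a local system to its associated flat bundle, with the grading carried along verbatim; in particular local constancy forces the rank to be locally constant, so the finite-dimensionality hypothesis indeed yields a genuine finite-rank graded vector bundle.
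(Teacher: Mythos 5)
Your proposal is correct, and it reaches the statement by a genuinely different mechanism than the paper. You build $\mathbb{H}$ by hand: extract a locally constant \v{C}ech cocycle $(g_{ij})$ from the covering-space trivializations furnished by Lemma \ref{lem : cohomology presheaf locally constant for lab}, reinterpret it as smooth gluing data, and declare the constant frames horizontal. The paper instead tensors the locally constant sheaf with smooth functions, forming $\mathscr{H}_{\nabla}^+ \otimes_{\R} C^{\infty}(M)$, invokes the Serre--Swan theorem to produce the vector bundle $\mathbb{H}$, and then defines the flat connection exactly as you do, by declaring a local section flat if it comes from $\mathscr{H}_{\nabla}^+$. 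Both arguments hinge on the same inputs -- local constancy of $\mathscr{H}_{\nabla}$ (hence the covering-space structure of $\mathscr{H}_{\nabla}^+$) and the standing hypothesis that $\mathfrak{S}$ is locally trivial -- and conditions a) and b) come out identically. What each buys: your cocycle construction is elementary and self-contained (no Serre--Swan), makes the locally constant rank explicit, and reduces flatness and degree-preservation visibly to local constancy of the transition data, which you correctly identify as the one point needing care; the paper's route is shorter and coordinate-free, but leaves local freeness and the smooth structure on $\mathbb{H}$ implicit in the Serre--Swan correspondence. One small touch-up to your write-up: overlaps $U_i \cap U_j$ need not be connected, so $g_{ij}$ should be taken as a locally constant map from $U_i \cap U_j$ into the degree-preserving linear isomorphisms $V_i \to V_j$, rather than a single isomorphism per overlap; since you only ever use local constancy (not constancy), nothing downstream changes. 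Your closing remark tying parallel transport to $\mathrm{mon}(c)$ via Theorem \ref{thm : Monodromy} is consistent with the paper, though it is not needed for the corollary itself.
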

\begin{proof}
Since $\mathscr{H}_{\nabla}^+$ is a locally finitely generated $\R$-module by hypothesis, $\mathscr{H}_{\nabla}^+ \otimes_{\R}C^{\8}(M)$ is a locally finitely generated $C^{\8}(M)$-module, and hence corresponds by the Serre-Swan theorem to the sheaf of local sections of a smooth vector bundle $\mathbb{H}$ on $M$. The canonical map $\mathscr{H}_{\nabla}^+ \to \mathbb{H}$ is a smooth bijection. Since $\mathscr{H}_{\nabla}^+$ is a covering space, it defines a flat connection by declaring a local section of $\mathbb{H}$ to be flat if it comes from a local section of $\mathscr{H}_{\nabla}^+$.
\end{proof}

The discussion so far allows us to describe a version of homotopy invariance for Lie algebroid cohomology as a twisting by the system of local coefficients:

\begin{corollary}\label{cor : Homotopy invariance}
Let $A$ be a Lie algebroid on $M$, $\nabla:A \acts D$ a representation, and let $\phi : N \times [0,1] \to M$ be a smooth map for which each $\phi_{\epsilon}:N \to M$ is transverse to $A$. If $\phi^!(A)$ turn $\pr: N \times [0,1] \to [0,1]$ into a locally trivial submersion by Lie algebroids, then $\phi_{\epsilon}^*(\omega)$, $\omega \in \mathrm{H}^q(A)$, is parallel transport of $\phi_{0}^*(\omega)$ with respect to any complete connection.
\end{corollary}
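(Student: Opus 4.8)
The plan is to realize $\phi_\epsilon^*(\omega)$ as the restriction to the fibre over $\epsilon$ of a single global cohomology class on $\mathfrak{S}:=(\phi^!(A),\pr)$, and then invoke the monodromy description of Theorem \ref{thm : Monodromy}. By Lemma \ref{lem : path and bundles of lalgs}, the transversality of each $\phi_\epsilon$ makes $\mathfrak{S}=(\phi^!(A),\pr)$ a submersion by Lie algebroids over $[0,1]$, which is locally trivial by hypothesis; in particular it carries a complete Ehresmann connection by Theorem \ref{thm : LAB iff}, and the presheaf $\mathscr{H}_{\nabla}$ is locally constant by Lemma \ref{lem : cohomology presheaf locally constant for lab}. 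The pullback morphism $(\Phi,\phi):\phi^!(A) \to A$ of Lemma \ref{lem : Pullback submersion by Lie algebroids} induces in cohomology a map $\Phi^*:\HH^q(A;D) \to \HH^q(\phi^!(A);\phi^*(D))$, and I set $\alpha:=\Phi^*(\omega)$.

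First I would establish the functoriality identity $i_\epsilon^*(\alpha)=\phi_\epsilon^*(\omega)$, where $i_\epsilon$ denotes restriction to the fibre over $\epsilon$. Writing $j_\epsilon:N \inc N \times [0,1]$ for the inclusion of $N \times \{\epsilon\}$, one has $\phi \circ j_\epsilon = \phi_\epsilon$, so by functoriality of the Lie algebroid pullback the fibre $i_\epsilon^!\mathfrak{S}$ is canonically $\phi_\epsilon^!(A)$, and the composite $\Phi \circ j_\epsilon:\phi_\epsilon^!(A) \to A$ is precisely the pullback morphism attached to $\phi_\epsilon$. Passing to cohomology gives $i_\epsilon^* \circ \Phi^* = \phi_\epsilon^*$, that is, $i_\epsilon^*(\alpha)=\phi_\epsilon^*(\omega)$. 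In particular $\alpha$ is a global class whose restriction to the initial fibre is $i_0^*(\alpha)=\phi_0^*(\omega)$.

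It then remains to recognize parallel transport as restriction of this global extension. By local triviality the restriction maps $i_0^*$ and $i_\epsilon^*$ from $\HH^q(\mathfrak{S})$ to the fibre cohomologies are isomorphisms (item c) of Proposition \ref{pro : pullback of complete}), so the monodromy from the fibre over $0$ to the fibre over $\epsilon$ is the well-defined isomorphism $i_\epsilon^* \circ (i_0^*)^{-1}$, visibly independent of any choice of connection. Theorem \ref{thm : Monodromy} identifies this isomorphism with parallel transport for any complete Ehresmann connection. Applying it to $\phi_0^*(\omega)=i_0^*(\alpha)$ yields
\[
i_\epsilon^*\bigl((i_0^*)^{-1}(i_0^*(\alpha))\bigr)=i_\epsilon^*(\alpha)=\phi_\epsilon^*(\omega),
\]
which is the assertion. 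The only genuinely substantive input is Theorem \ref{thm : Monodromy}'s description of monodromy as the restriction of a global closed extension; here that extension is simply the pullback class $\alpha$, so the corollary is essentially a bookkeeping application. The one point requiring care is the functoriality identity $i_\epsilon^* \Phi^* = \phi_\epsilon^*$, together with the verification that the fibre restrictions are isomorphisms — both of which follow from the results already established.
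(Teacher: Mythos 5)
Your proof is correct and takes essentially the same route the paper intends: the corollary is left as an immediate consequence of Theorem \ref{thm : Monodromy} together with item c) of Proposition \ref{pro : pullback of complete}, namely realizing the monodromy as $i_{\epsilon}^*\circ (i_0^*)^{-1}$ and applying it to the global extension $\Phi^*(\omega)$, whose fibre restrictions are $\phi_{\epsilon}^*(\omega)$ by functoriality of the pullback. Your two points of care --- the identity $i_{\epsilon}^*\circ\Phi^*=\phi_{\epsilon}^*$ and the fact that the restrictions $i_{\epsilon}^*$ are isomorphisms --- are exactly the ingredients the paper's surrounding discussion supplies.
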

% \begin{proof}
%  Just note that, for each cocycle $\w \in \Omega^q(A;D)$, the cocycle $(\vec{f},\id,\overline{f})^{\dagger}(\w) \in \Omega^q(\overline{f}^!A;\overline{f}^*D)$ extends $(\vec{f_{0}},\id,\overline{f_{0}})^{\dagger}(\w) \in \Omega^q(\overline{f_{0}}^!A;\overline{f_{0}}^*D)$. Hence the claim follows from Theorem \ref{thm : Monodromy}.
% \end{proof}

When the system of local coefficients is trivial, this reduces to the familiar statement that $\phi_{\epsilon}^*(\omega)$ does not depend on $\epsilon$; in the general case, it describes parallel transport $\Phi_{\epsilon}^{*-1}\phi_{0}^*(\omega)$. (See also the forthcoming work \cite{BraRoss}). The appearance of the twisting by the local system of coefficients is explained by the fact that, in the generality of Corollary \ref{cor : Homotopy invariance}, for each fixed $x \in N$, the paths $\phi_{\epsilon}(x) \in M$ need \emph{not} lie in the same leaf of $A$ -- a phenomenon which does not occur in classical de Rham theory, and a trivial example of which we present below:

\begin{example}
 Let $A$ be a Lie algebroid on $M$ with trivial bracket and anchor (i.e., a vector bundle). Then every section of $\wedge A^*$ is a cocycle. If $\phi_{\epsilon}(x)$ is the flow of a vector field $v \in \X(M)$, then $\phi_{\epsilon}(x)$ lies in a leaf of $A$ exactly when $v$ vanishes at $x$, and $\phi_{\epsilon}^*(\omega) \neq \phi_0^*(\omega)$ for some $\epsilon$, unless $\mathscr{L}_v\omega(x) = 0$.
\end{example}

\section{Localization}\label{sec : Localization}

Let $A$ be a Lie algebroid on $M$, and $\nabla:A \acts D$ a representation. Fix an open cover $\mathscr{U}=(U_i)_{i \in I}$ of $M$, and for each finite subset $\alpha \subset I$, denote by $U_{\alpha}$ the intersection of all its members $\cap_{i \in \alpha}U_i$. Consider the \v{C}ech-de Rham double complex of $\mathscr{U}$ with values in the sheaf $\Omega_{\nabla}$ of smooth section of $\wedge A^* \otimes D$:
\[
 C^p(\mathscr{U};\Omega^q_{\nabla}):= \prod_{|\alpha|=p+1} \Omega^q_{\nabla}(U_{\alpha}), \quad p,q \geqslant 0,
\]with differentials $\dd_{\nabla} : C^p(\mathscr{U};\Omega^q_{\nabla}) \to C^p(\mathscr{U};\Omega^{q+1}_{\nabla})$ induced by $(\Omega_{\nabla},\dd_{\nabla})$, and the \v{C}ech differential,
\begin{align*}
 \delta : C^p(\mathscr{U};\Omega^q_{\nabla}) \to C^{p+1}(\mathscr{U};\Omega^q_{\nabla}), \quad (\delta\omega)_{\alpha_0\cdots\alpha_p} = \sum_{i=0}^p (-1)^i\omega_{\alpha_0\cdots\widehat{\alpha_i}\cdots\alpha_p}
\end{align*}
The natural map $r: \Omega(A;D) \rmap C^0(\mathscr{U};\Omega^q_{\nabla})$, $r(\omega)_i:=\omega|_{U_i}$ makes the augmented complex
\[
 0 \rmap \Omega(A;D) \stackrel{r}{\rmap} C^0(\mathscr{U};\Omega^q_{\nabla}) \stackrel{\delta}{\rmap} C^1(\mathscr{U};\Omega^q_{\nabla}) \stackrel{\delta}{\rmap} \cdots \stackrel{\delta}{\rmap} C^p(\mathscr{U};\Omega^q_{\nabla}) \stackrel{\delta}{\rmap}\cdots
\]exact, and so may we deduce as in \cite[Section 2.9]{BottTu} that
\begin{lemma}[Mayer-Vietoris principle]\label{lem : Mayer-Vietoris principle}
 The complex $(\Omega_{\nabla},\dd_{\nabla})$ is quasi-isomorphic to the \emph{total complex} $(\mathbf{C}_{\nabla},\mathbf{d})$,
\[
 \mathbf{C}_{\nabla}^n:= \oplus_{p+q=n}C^p(\mathscr{U};\Omega^q_{\nabla}), \quad \mathbf{d}:=\delta + (-1)^p\dd_{\nabla}.
\]
\end{lemma}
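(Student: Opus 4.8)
The plan is to follow the standard generalized Mayer--Vietoris argument of \cite[Section 2.9]{BottTu}, whose only genuine input is the exactness of the augmented \v{C}ech rows; everything else is formal homological algebra for double complexes. Concretely, I would regard $C^{p,q}:=C^p(\mathscr{U};\Omega^q_{\nabla})$ as a first-quadrant double complex with horizontal differential $\delta$ and vertical differential $\dd_{\nabla}$, together with its augmentation $r:\Omega^q_{\nabla}(M)\rmap C^{0,q}$, and show that $r$ induces the desired quasi-isomorphism onto the total complex $(\mathbf{C}_{\nabla},\mathbf{d})$.

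First I would establish that, for each fixed de Rham degree $q$, the augmented row
\[
 0 \rmap \Omega^q_{\nabla}(M) \stackrel{r}{\rmap} C^0(\mathscr{U};\Omega^q_{\nabla}) \stackrel{\delta}{\rmap} C^1(\mathscr{U};\Omega^q_{\nabla}) \stackrel{\delta}{\rmap} \cdots
\]
is exact. Exactness at $C^0$ is just the sheaf (gluing) property of $\Omega^q_{\nabla}$: a $0$-cochain annihilated by $\delta$ is a family of local sections agreeing on overlaps, hence the restriction of a unique global section. Exactness in positive \v{C}ech degree follows from a contracting homotopy built from a partition of unity $\{\rho_i\}$ subordinate to $\mathscr{U}$; setting
\[
 (K\omega)_{\alpha_0\cdots\alpha_{p-1}}:=\sum_i \rho_i\,\omega_{i\alpha_0\cdots\alpha_{p-1}},
\]
one checks a homotopy identity of the form $\delta K + K\delta = \id$ on $C^p$ for $p\geqslant 1$. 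The one technical point worth care here --- and the only place where the argument is not completely formal --- is that $\omega_{i\alpha_0\cdots\alpha_{p-1}}$ is a priori only defined on $U_i\cap U_{\alpha_0\cdots\alpha_{p-1}}$, so one must use that $\rho_i$ is supported in $U_i$ to extend each summand by zero to a smooth section over $U_{\alpha_0\cdots\alpha_{p-1}}$; local finiteness of $\mathscr{U}$ then makes the sum locally finite and the total a bona fide element of $C^{p-1}$. This goes through verbatim for $\Omega_{\nabla}=\Gamma(\wedge A^*\otimes D)$ because it is a fine sheaf of $C^{\8}(M)$-modules, exactly as for ordinary differential forms.

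With row exactness in hand, the conclusion is purely homological. I would run the spectral sequence of the double complex obtained by first taking cohomology with respect to $\delta$: row exactness gives $E_1^{p,q}=0$ for $p\geqslant 1$ and $E_1^{0,q}=\Omega^q_{\nabla}(M)$, so the sequence collapses at $E_2$ with $E_2^{0,q}=\HH^q(A;D)$ concentrated in the single column $p=0$, whence $\HH(\mathbf{C}_{\nabla},\mathbf{d})\cong \HH(A;D)$, the isomorphism being realized precisely by the edge map induced by $r$. (Equivalently, one can replace the spectral sequence by the explicit tic-tac-toe zig-zag of \cite{BottTu}, lifting a $\mathbf{d}$-cocycle down the $p=0$ column using the contracting homotopy $K$.) The main --- indeed the only --- obstacle is thus the careful bookkeeping in the partition-of-unity homotopy of the previous step; once that is settled the quasi-isomorphism drops out of the standard double-complex machinery.
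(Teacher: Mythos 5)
Your proposal is correct and follows essentially the same route as the paper, which asserts the exactness of the augmented \v{C}ech rows and then defers to \cite[Section 2.9]{BottTu} for the standard double-complex argument; your write-up simply supplies the details left implicit there, namely the partition-of-unity contracting homotopy $K$ (including the correct extension-by-zero point, which works since $\Omega_{\nabla}=\Gamma(\wedge A^*\otimes D)$ is a fine sheaf of $C^{\8}(M)$-modules) and the collapsing spectral sequence, or equivalently the zig-zag, realizing the quasi-isomorphism via the edge map induced by $r$.
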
\noindent See in this connection the recent work \cite{Bruzzo}.

\begin{proposition}[Leray principle for Lie algebroid cohomology]\label{pro : Leray}
 If $\mathfrak{S}=(A,p)$ is a Lie algebroid bundle, and $\mathscr{U}=(U_i)$ is a good cover of $M$, there is a bigraded spectral sequence $\dd^r : E_r^{p,q} \to E_r^{p+r,q-r+1}$, which converges to $\HH(A;D)$, and whose second page is given by \v{C}ech cohomology with values in the presheaves $\mathscr{H}^q_{\nabla}$
\[
 E_2^{p,q}:=\HH^p(\mathscr{U};\mathscr{H}^q_{\nabla}) \Longrightarrow \HH^{p+q}(A;D).
\]
\end{proposition}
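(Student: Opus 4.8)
The plan is to run the standard filtered double-complex spectral sequence of Bott--Tu \cite[Section 2.9]{BottTu} on the \v{C}ech--de Rham complex, taking the filtration by \v{C}ech degree. By Lemma \ref{lem : Mayer-Vietoris principle}, the Lie algebroid cohomology $\HH(A;D)$ is computed by the total complex $(\mathbf{C}_{\nabla},\mathbf{d})$, so it suffices to produce a spectral sequence abutting to $H(\mathbf{C}_{\nabla})$ with the stated second page. First I would filter $(\mathbf{C}_{\nabla},\mathbf{d})$ by
\[
 F^k\mathbf{C}_{\nabla}^n := \bigoplus_{p \geqslant k,\ p+q=n} C^p(\mathscr{U};\Omega^q_{\nabla}).
\]
Because the double complex $C^p(\mathscr{U};\Omega^q_{\nabla})$ lives in the first quadrant, this filtration is bounded in each total degree $n$ (there $0 \leqslant p \leqslant n$), so the associated spectral sequence converges to $H(\mathbf{C}_{\nabla}) = \HH(A;D)$ with no convergence subtleties.

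Next I would identify the early pages. The zeroth differential $d_0$ is the vertical part $(-1)^p\dd_{\nabla}$, so $E_0^{p,q} = C^p(\mathscr{U};\Omega^q_{\nabla})$ and $E_1^{p,q} = H^q\bigl(C^p(\mathscr{U};\Omega^{\bullet}_{\nabla}),\dd_{\nabla}\bigr)$. Since $C^p(\mathscr{U};\Omega^{\bullet}_{\nabla}) = \prod_{|\alpha|=p+1}\Omega^{\bullet}_{\nabla}(U_{\alpha})$ and cohomology commutes with products of complexes of vector spaces, this gives
\[
 E_1^{p,q} = \prod_{|\alpha|=p+1}\HH^q(A|_{\Sigma|_{U_{\alpha}}};D|_{\Sigma|_{U_{\alpha}}}) = \prod_{|\alpha|=p+1}\mathscr{H}^q_{\nabla}(U_{\alpha}) = C^p(\mathscr{U};\mathscr{H}^q_{\nabla}),
\]
which is exactly the defining value of the presheaf $\mathscr{H}^q_{\nabla}$. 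The residual differential $d_1$ on $E_1$ is the map descended from the \v{C}ech differential $\delta$; here one checks that $\delta$ commutes with $\dd_{\nabla}$ and hence passes to vertical cohomology, so that
\[
 E_2^{p,q} = H^p\bigl(C^{\bullet}(\mathscr{U};\mathscr{H}^q_{\nabla}),\delta\bigr) = \HH^p(\mathscr{U};\mathscr{H}^q_{\nabla}),
\]
as claimed.

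The one point that genuinely uses the hypotheses --- rather than the formal machinery above, which produces a spectral sequence for \emph{any} open cover --- is the interpretation of the $E_2$ term. I would invoke local triviality through Lemma \ref{lem : cohomology presheaf locally constant for lab} to conclude that $\mathscr{H}^q_{\nabla}$ is a locally constant presheaf, and the good cover hypothesis to ensure each $\mathscr{H}^q_{\nabla}(U_{\alpha})$ is canonically the fibre cohomology $\HH^q(A_x;D_x)$; together these make $\HH^p(\mathscr{U};\mathscr{H}^q_{\nabla})$ the \v{C}ech cohomology of a genuine local system on a good cover, which is what warrants the name \emph{Leray principle}. The main obstacle I anticipate is entirely bookkeeping: verifying that taking $\dd_{\nabla}$-cohomology really does commute with the infinite product over $|\alpha|=p+1$ (immediate, since products are exact in $\mathrm{Vect}$, so kernels, images, and their quotients all commute with $\prod$) and that the induced differential on $E_1$ is precisely $\delta$. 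Neither step is deep, but both must be stated with care because of the products appearing in the definition of $C^p(\mathscr{U};\Omega^q_{\nabla})$.
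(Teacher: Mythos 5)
Your proposal is correct and follows exactly the paper's route: the paper likewise runs the standard first-quadrant double-complex spectral sequence of \cite[Theorems 4.14 and 4.18]{BottTu} on $(C^p(\mathscr{U};\Omega^q_{\nabla}),\dd_{\nabla},\delta)$, reads off $E_1^{p,q}=C^p(\mathscr{U};\mathscr{H}^q_{\nabla})$ and $E_2^{p,q}=\HH^p(\mathscr{U};\mathscr{H}^q_{\nabla})$, and identifies the abutment with $\HH(A;D)$ via Lemma \ref{lem : Mayer-Vietoris principle}. You merely unpack the Bott--Tu citation (filtration by \v{C}ech degree, exactness of products in $\mathrm{Vect}$, convergence from boundedness), which is careful but not a different argument.
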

\begin{proof}
 Applying \cite[Theorem 4.14]{BottTu} to the double complex $(C^p(\mathscr{U};\Omega^q_{\nabla}),\dd_{\nabla}, \delta)$ we get as in \cite[Theorem 4.18]{BottTu} a spectral sequence converging to the cohomology of the total complex $(\mathbf{C}_{\nabla},\mathbf{d})$, whose first and second pages are given by
 \[
  E_1^{p,q}=C^p(\mathscr{U};\mathscr{H}_{\nabla}^q), \qquad E_2^{p,q}=\HH^p(\mathscr{U};\mathscr{H}_{\nabla}^q),
 \]while Lemma \ref{lem : Mayer-Vietoris principle} identifies $\HH(\mathbf{C}_{\nabla})$ with $\HH(A;D)$.
\end{proof}
% 
% \begin{remark}
%  Many spectral sequences have been used to compute Lie algebroid cohomology, notably the Hochschild and Serre ones in \cite[Sections 7.1-7.4]{Mackenzie}, \cite{BruzzoMencattini}, \cite{Bressler} and \cite{Kubarski} in the transitive case, in \cite{Cr_VEst} for pullbacks by submersions, and in \cite{Brahic} for fibrations of Lie algebroids. The one in Proposition \ref{pro : Leray} is a mixture of the \v{C}ech approach in \cite{Kubarski}, in a set-up halfway between \cite{Cr_VEst} and \cite{Brahic}, and which incorporates coefficients as in \cite{Bressler}. This is mainly a matter of taste, as the only conclusion we shall draw from it relies on formal properties of spectral sequences, which could be deduced just as easily from any other sensible combination.
% \end{remark}

\begin{theorem}\label{thm : localization of cohomology}
 Let $\mathfrak{S}=(A,p)$ be a locally trivial submersion by Lie algebroids, and $\nabla:A \acts D$ a representation. Then for any point $x \in M$, the homomorphism
  \[
  i_x^* : \mathrm{H}^{n}(A;D) \rmap \mathrm{H}^{n}(A_x;D_x)
 \]induced by inclusion is injective, provided that:
 \begin{tasks}(2)
  \task $\mathrm{H}^{q}(A_x;D_x)=0$ for each $q<n-1$,
  \task $M$ is connected,
 \end{tasks}and either of the following conditions holds:

 \begin{tasks}[counter-format = $c$tsk[1])$ $](2)
  \task $\phantom{1}\mathrm{H}^{n-1}(A_x;D_x)=0$, or
  \task $\phantom{1}\pi_1(M,x)=0$ and $\dim\mathrm{H}^{n-1}(A_x;D_x)<\8$.
 \end{tasks}
\end{theorem}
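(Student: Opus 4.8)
The plan is to run the Leray spectral sequence of Proposition \ref{pro : Leray} and let the hypotheses collapse it onto the relevant antidiagonal. Fix a good cover $\mathscr{U}$ of $M$. By Lemma \ref{lem : cohomology presheaf locally constant for lab} each presheaf $\mathscr{H}^q_{\nabla}$ is locally constant, so its \v{C}ech cohomology over a good cover computes the cohomology $\HH^p(M;\mathscr{H}^q_{\nabla})$ of the associated local system, whose stalk over $x$ is $\HH^q(A_x;D_x)$. I would therefore work with
\[
 E_2^{p,q}=\HH^p(M;\mathscr{H}^q_{\nabla}) \Longrightarrow \HH^{p+q}(A;D).
\]

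First I would exploit the vanishing hypothesis $\HH^q(A_x;D_x)=0$ for $q<n-1$. Since $M$ is connected and $\mathscr{H}^q_{\nabla}$ is locally constant with trivial stalk in this range, the whole local system $\mathscr{H}^q_{\nabla}$ vanishes there, so $E_2^{p,q}=0$ whenever $q<n-1$. On the antidiagonal $p+q=n$ this leaves only $E_2^{0,n}=\HH^0(M;\mathscr{H}^n_{\nabla})$ and $E_2^{1,n-1}=\HH^1(M;\mathscr{H}^{n-1}_{\nabla})$, every slot with $p\geqslant 2$ having $q=n-p<n-1$. A bookkeeping check on the differentials then shows that no $d_r$ with $r\geqslant 2$ can reach the slot $(1,n-1)$: the potential source $(1-r,n+r-2)$ has negative \v{C}ech degree, and the potential target $(1+r,n-r)$ lies in a row $q<n-1$. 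Hence $E_{\infty}^{1,n-1}=E_2^{1,n-1}$, while $E_{\infty}^{p,n-p}=0$ for $p\geqslant 2$, so the filtration on $\HH^n(A;D)$ has at most the two graded pieces $E_{\infty}^{0,n}$ and $E_{\infty}^{1,n-1}$.

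Next I would identify $i_x^*$ with the edge homomorphism. The edge map $\HH^n(A;D)\to E_{\infty}^{0,n}\hookrightarrow E_2^{0,n}=\HH^0(M;\mathscr{H}^n_{\nabla})$ carries a class to the global section of $\mathscr{H}^n_{\nabla}$ assembled from its local restrictions; evaluating this section at the stalk over $x$, under the canonical identification of that stalk with $\HH^n(A_x;D_x)$, returns exactly $i_x^*$. As $M$ is connected, a global section of a locally constant sheaf is determined by its value at $x$, so stalk evaluation is injective; therefore $\ker i_x^*$ coincides with the kernel of the edge map, namely the bottom filtration step $E_{\infty}^{1,n-1}=E_2^{1,n-1}=\HH^1(M;\mathscr{H}^{n-1}_{\nabla})$. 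In other words, $i_x^*$ is injective precisely when $\HH^1(M;\mathscr{H}^{n-1}_{\nabla})=0$.

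Finally I would check this vanishing under each alternative. If $\HH^{n-1}(A_x;D_x)=0$, then on connected $M$ the locally constant system $\mathscr{H}^{n-1}_{\nabla}$ vanishes identically and $\HH^1$ is zero for trivial reasons. If instead $M$ is simply connected and $\dim\HH^{n-1}(A_x;D_x)<\infty$, I would use the finite-dimensionality to invoke the Gau\ss-Manin corollary, realizing $\mathscr{H}^{n-1}_{\nabla}$ as the sheaf of flat sections of a flat bundle $\mathbb{H}^{n-1}$; since $\pi_1(M,x)=0$ this flat bundle is trivial, so $\HH^1(M;\mathscr{H}^{n-1}_{\nabla})\cong\HH^1(M;\R)\otimes\HH^{n-1}(A_x;D_x)$, which vanishes because $\HH_1(M;\Z)=\pi_1(M,x)^{\mathrm{ab}}=0$ yields $\HH^1(M;\R)=0$. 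The step I expect to demand the most care is the identification of $i_x^*$ with the edge-plus-evaluation composite—hence that $\ker i_x^*$ is exactly $E_{\infty}^{1,n-1}$—since it is here that the spectral sequence, the stalk description of $\mathscr{H}^n_{\nabla}$, and the connectedness of $M$ must be combined correctly.
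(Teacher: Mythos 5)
Your proposal is correct and follows essentially the same route as the paper: the spectral sequence of Proposition \ref{pro : Leray}, collapse of the antidiagonal via hypothesis a), vanishing of $E_{\infty}^{1,n-1}$ under c1) or c2) (using the trivialization of the local system from simple connectedness and finite dimension), and injectivity of evaluation at $x$ on global sections of a locally constant sheaf over connected $M$. The only difference is organizational: you identify $\ker i_x^*$ with $E_{\infty}^{1,n-1}=E_2^{1,n-1}$ before killing it, whereas the paper first kills $E_{\infty}^{1,n-1}$ and then factors $i_x^*$ through the isomorphism $s_{\cdot}$ and $\mathrm{ev}_x$ --- your differential bookkeeping at the slot $(1,n-1)$ in fact makes explicit a step the paper uses tacitly in case c2).
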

\begin{proof}
Assumption a) implies that the spectral sequence of Proposition \ref{pro : Leray} satisfies $E^{n-q,q}_2=0$ for all $q<n-1$. This implies on the one hand that $E^{n-q,q}_{\8}=0$ for all such $q<n-1$, and that $E^{n-q,q}_r$ are subspaces of $E^{n-q,q}_2$ for all $r \geqslant 2$ and $q \geqslant n-1$. Assuming either c1) or c2) implies that also $E^{1,n-1}_{\8}=0$, and hence that the filtration of $\mathrm{H}^n(A;D)$ has a single nontrivial step $E^{0,n}_{\8}=\mathrm{H}^0(M;\mathscr{H}^{n}_{\nabla})$. Indeed, assuming c1) this follows from $E^{1,n-1}_2=0$, while c2) implies that $\mathrm{Et}(\mathscr{H}^{n-1}_{\nabla}) \simeq M \times \mathscr{H}^{n-1}(A_x;D_x)$, and that
\[
 E^{1,n-1}_{\8}=\mathrm{H}^1(M;\mathscr{H}^{n-1}_{\nabla}) \simeq \mathrm{H}^1(M) \otimes \mathrm{H}^{n-1}(A_x;D_x)  =0.
\]Hence\[s_{\cdot}:\mathrm{H}^{n}(A;D) \diffto \mathrm{H}^0(M;\mathscr{H}^{n}_{\nabla}), \quad s_{\omega}(y):=i_y^*[\omega], \quad \omega \in \Omega^q(A)\]is an isomorphism. Now observe that $i_x^*:\mathrm{H}(A;D) \rmap \mathrm{H}(A_x;D_x)$ factors as
\[
 \xymatrix{
 \mathrm{H}^q(A;D) \ar[dr]_{i_x^*} \ar[rr]^{s_{\cdot}} && \mathrm{H}^0(M;\mathscr{H}^{n}_{\nabla}) \ar[dl]^{\mathrm{ev}_x} \\
 & \mathrm{H}^q(A_x;D_x) &
 }
\]where $\mathrm{ev}_x$ denotes the map $\mathrm{ev}_x(s):=s(x)$ for $s \in \mathrm{H}^0(M;\mathscr{H}^{n}_{\nabla})$, which is injective, since the set of points on which two sections of a covering space agree is both open and closed, and $M$ is assumed connected in b). Therefore $i_x^*$ is injective.
\end{proof}

\end{document}